\newtheorem{theorem}{Theorem}[section]
\newtheorem{proposition}[theorem]{Proposition}
\newtheorem{corollary}[theorem]{Corollary}
\theoremstyle{definition}
\theoremstyle{definitions}
\newtheorem{definition}[theorem]{Definition}
\newtheorem{remark}[theorem]{Remark}
\newtheorem{example}[theorem]{Example}
\theoremstyle{notations}
\theoremstyle{remarks}
\newcommand{\lo}{\longrightarrow}
\newcommand{\wt}{\widetilde}
\newcommand{\al}{\alpha}
\newcommand{\bt}{\beta}
\newcommand{\psp}{\pi_1^{sp}(X,x)}
\journal{ }
\begin{document}
\begin{frontmatter}
\title{Unique Path Lifting from Homotopy Point of View}
\author[affil1]{Mehdi~Tajik}
\ead{mm.tj@stu.um.ac.ir}
\author[affil1]{Behrooz~Mashayekhy\corref{cor1}}
\ead{bmashf@um.ac.ir}
\author[affil2]{Ali~Pakdaman}
\ead{a.pakdaman@gu.ac.ir}
\address[affil1]{Department of Pure Mathematics, Center of Excellence in Analysis on Algebraic Structures, Ferdowsi University of Mashhad,
P.O.Box 1159-91775, Mashhad, Iran.}
\address[affil2]{Department of Mathematics, Faculty of Sciences, Golestan University, P.O.Box 155, Gorgan, Iran.}
\cortext[cor1]{Corresponding author}
\begin{abstract}
The paper is devoted to introduce some notions extending the unique path lifting property from a homotopy viewpoint and to study their roles in the category of fibrations.\ First, we define some homotopical kinds of the unique path lifting property and find all possible relationships between them. Moreover, we supplement the full relationships of these new notions
in the presence of fibrations.\ Second, we deduce some results in the category of fibrations with these notions instead of unique path lifting such as the existence of products and coproducts. Also, we give a brief comparison of these new categories to some categories of the other generalizations of covering maps.\ Finally, we present two subgroups of the fundamental group related to the fibrations with these notions and compare them to the subgroups of the fundamental group related to covering and generalized covering maps.
\end{abstract}

\begin{keyword}
Homotopically lifting\sep Unique path lifting\sep Fibration\sep Fundamental group\sep Covering map.
\MSC[2010]{55P05, 57M10, 57M05.}
\end{keyword}
\end{frontmatter}
\section{Introduction}
\subsection{Motivation}
We recall that a map $p:E\lo B$ is called a fibration if it has homotopy lifting property with respect to an arbitrary space.\ A map $p:E\lo B$ is said to have unique path lifting property if given paths $w$ and $w'$ in $E$ such that $p\circ w=p\circ w'$ and $w(0)=w'(0)$, then $w=w'$ (see \cite{SP}).

Fibrations in homotopy theory and fibrations with unique path lifting property, as a generalization of covering spaces, are important.\
In fact, unique path lifting causes a given fibration $p:E\lo B$ has some behaviours similar to covering maps such as injectivity of induced homomorphism $p_*$, uniqueness of lifting of a given map and being homeomorphic of any two fibers \cite{SP}.\ Moreover, unique path lifting has important role in covering theory and some recent generalizations of covering theory in \cite {BR1,BR2,BD2,FZ}. In the absence of unique path lifting, some certain fibrations exist in which some of the above useful properties are available. However, these fibrations lack some of the properties which unique path lifting guaranties them, notably homeomorphicness of fibers.

We would like to generalize unique path lifting in order to preserve some homotopical behaviors of fibrations with unique path lifting.\ In Section 2, we consider unique path lifting problem in the homotopy category of topological spaces by introducing some  various kinds of the unique path lifting property from homotopy point of view. Moreover, we find all possible relationships between them by giving some theorems and examples.
Then in Section 3, we supplement the full relationships between these new notions in the presence of fibrations and also study fibrations with these new unique path lifting properties .

By the \emph{weakly unique path homotopically lifting property (wuphl)} of a map $p:E\lo B$ we mean that if $p\circ w\simeq p\circ w'$\ rel\ $\dot I$, $w(0)=w'(0)$ and $w(1)=w'(1)$, then $w\simeq w'$\ rel\ $\dot I$.\ We will show among other things that a fibration has wuphl if and only if every loop in each of its fibers is nullhomotopic, which is a homotopy analogue of a similar result when we deal with unique path lifting property (see \cite[Theorem 2.2.5]{SP}).\

 In Section 4, we make a new category, Fibwu, in which objects are fibrations with weakly unique path homotopically lifting property and commutative diagrams are morphisms.\ This category has the category of fibrations with unique path lifting property, Fibu, as a subcategory.\ Also, by fixing base space of fibrations, we construct the category Fibwu(B) of fibrations over a space $B$ with weakly unique path homotopically lifting property as objects and commutative triangles as morphisms.\ We show that these new categories have products and coproducts.\ A brief comparison of these new categories to the categories of other generalizations of covering maps is brought at the end of the section.\

 Finally, in the last section, we introduce two subgroups of the fundamental group of a given space $X$, $\pi_1^{fu}(X,x)$ and $\pi_1^{fwu}(X,x)$.\ In fact, these are the intersection of all the image subgroups of fibrations with unique path lifting and fibrations with weakly unique path homotopically lifting over $X$, respectively.\ We find the relationships of these two subgroups with the two famous subgroups of the fundamental group, the Spanier group $\psp$  and the generalized subgroup $\pi_1^{gc}(X,x)$ (see \cite{FRVZ} and \cite{BR1}, resectively).\ As an application, we show that the category Fibu(X) admits a simply connected universal object if and only if $\pi_{1}^{fu}(X,x)=0$.\
\subsection{Preliminaries}
Throughout this paper, all spaces are path connected.\ A \emph{map} $f:X\lo Y$ means a continuous function and $f_*:\pi_1(X,x)\lo\pi_1(Y,y)$ will denote a homomorphism induced by $f$ on fundamental groups when $f(x)=y$.\ Also, by the \emph{image subgroup of $f$} we mean $f_*(\pi_1(X,x))$.\

For given maps $p:E\lo B$ and $f:X\lo B$, the \emph{lifting problem} for $f$ is to determine whether there is a map $f':X\lo E$ such that $f=p\circ f'$.\ A map $p:E\lo B$ is said to have the \emph{homotopy lifting property} with respect to a space $X$ if for given maps $f':X\lo E$ and $F:X\times I\lo B$ with $F\circ J_0=p\circ f'$, where $J_0:X\lo X\times I$ defined by $J_0(x)=(x,0)$, there is a map $F':X\times I\lo E$ such that $F'\circ J_0=f'$ and $p\circ F'=F$.\

If $\al:I\lo X$ is a path from $x_0=\al(0)$ to $x_1=\al(1)$, then $\al^{-1}$ defined by $\al^{-1}(t)=\al(1-t)$ is the inverse path of $\alpha$.\ For $x\in X$, $c_x$ is the constant path at $x$.\ If $\al, \bt:I\lo X$ are two paths with $\al(1)=\bt(0)$, then $\al*\bt$ denotes the usual concatenation of the two paths.\ Also, all homotopies between paths are relative to end points.\

A covering map is a map $p:\widetilde{X}\to X$ such that for every $x\in X$, there exists an open neighborhood $U$ of $x$, such that $p^{-1}(U)$ is a union of disjoint open sets in $\widetilde{X}$, each of which is mapped homeomorphically onto $U$ by $p$.\

%
 The category whose objects are topological spaces and whose morphisms are maps is denoted by Top and, by hTop, we mean the homotopy category of topological spaces.\ By Fib, we mean the category whose objects are fibrations and whose morphisms are commutative diagrams of maps
$$\xymatrix{
E \ar[r]^{h} \ar[d]_{p}
& E'\ar[d]^{p'}  \\
B \ar[r]^{h'} & B'  }$$
where $p:E\lo B$ and $p':E'\lo B'$ are fibrations.\ For a given space $B$, there exists a subcategory of Fib, denoted by Fib(B), whose objects are fibrations with base space $B$ and morphisms are commutative triangles.
$$\xymatrix{
E \ar[r]^{h} \ar[dr]_p & E'\ar[d]^{p'}  \\
 & B  }$$
 If we restrict ourselves to fibrations with unique path lifting, then we get two subcategories of Fib and Fib(B) which we denote them by Fibu and Fibu(B), respectively.
\section{Homotopically Lifting}
Let $p:E\to B$ be a map and $\alpha : I\rightarrow B$ be a path in $B$. A path $\wt{\alpha}:I\to E$ is called a lifting of the path $\alpha$ if $p\circ\wt{\alpha}=\alpha $.
Existence and uniqueness of path liftings are interesting problems in the category of topological spaces, Top.\ We are going to consider the path lifting problems in the homotopy category, hTop.\
\begin{definition}
Let $p:E\to B$ be a map and $\alpha : I\rightarrow B$ be a path in $B$.\ By a homotopically lifting of the path $\alpha$ we mean a path $\wt{\alpha}:I\to E$ with $p\circ\wt{\alpha}\simeq\alpha\ \mathrm{rel}\ \dot{I}$.\
\end{definition}
In the following, we recall the well-known notion {\it unique path lifting} and introduce some various kinds of this notion from homotopy point of view.\
\begin{definition}Let $p:E\to B$ be a map and let $\wt{\alpha}\ and\ \wt{\beta}$ be two arbitrary paths in E.\ Then we say that
\\(i)\ p has unique path lifting property (upl) if
\\$$\wt{\alpha}(0)=\wt{\beta}(0),\ p\circ\wt{\alpha} = p\circ\wt{\beta} \Rightarrow \wt{\alpha} = \wt{\beta}.$$
\\(ii)\ p has homotopically unique path lifting property (hupl) if
\\$$\wt{\alpha}(0)=\wt{\beta}(0),\ p\circ\wt{\alpha} =  p\circ\wt{\beta}\Rightarrow\wt{\alpha} \simeq \wt{\beta}\ \mathrm{rel}\ \dot{I} .$$
\\(iii)\ p has weakly homotopically unique path lifting property (whupl) if
\\$$\wt{\alpha}(0)=\wt{\beta}(0),\ \wt{\alpha}(1)=\wt{\beta}(1),\ p\circ\wt{\alpha} = p\circ\wt{\beta}\Rightarrow\wt{\alpha} \simeq \wt{\beta}\ \mathrm{rel}\ \dot{I}.$$
\\(iv)\ p has unique path homotopically lifting property (uphl) if
\\$$\wt{\alpha}(0)=\wt{\beta}(0),\ p\circ\wt{\alpha} \simeq p\circ\wt{\beta}\ \mathrm{rel}\ \dot{I} \Rightarrow\wt{\alpha} \simeq \wt{\beta}\ \mathrm{rel}\ \dot{I} .$$
\\(v)\ p has weakly unique path homotopically lifting property (wuphl) if
\\$$\wt{\alpha}(0)=\wt{\beta}(0),\ \wt{\alpha}(1)=\wt{\beta}(1),\ p\circ\wt{\alpha} \simeq  p\circ\wt{\beta}\ \mathrm{rel}\ \dot{I}\Rightarrow\wt{\alpha} \simeq \wt{\beta}\ \mathrm{rel}\ \dot{I}. $$
\end{definition}
\begin{example}Every continuous map from a simply connected space to any space has wuphl and whupl.\ Note that every injective map has upl and also, for injective maps, wuphl and uphl are equivalent.\
\end{example}

We recall that for a given pointed space $(X,x)$, $P(X,x)$ is the set of all paths in $X$ starting at $x$.\ Also, we recall that the fundamental groupoid of $X$ is the set of all homotopy classes of paths in $X$ which we denote it by $\Pi X$
$$\Pi X=\{[\al]\ |\ \al:I\lo X\ is\ continuous\}.$$
If $f:X\lo Y$ is a map, then by $Pf:P(X,x)\lo P(Y,y)$ we mean the function given by $Pf(\al)=f\circ \al$ and by $f_*:\Pi X\lo \Pi Y$ we mean the function given by $f_*([\al])=[f\circ\al]$.\
Also, by a slight modification, we define the set of all paths in $X$ starting at $x$
$$\Pi (X,x)=\{[\al]\in \Pi X\ |\ \al(0)=x\}.$$
By a straightforward verification we have the following results.\
\begin{proposition}\label{P2.4}
Let $p:E\lo B$ be a map.\ Then\\
$(i)$ Injectivity of $Pp:P(E,e)\lo P(B,b)$ for any $e\in E$ is equivalent to $p$ having upl.\ \\
$(ii)$ Injectivity of $p_{\ast}: \pi_{1}(E,e)\to \pi_{1}(B,b)$ for any $e\in E$ is equivalent to $p$ having wuphl.\ \\
$(iii)$ Injectivity of $p_{\ast}: \Pi(E,e)\to \Pi(B,b)$ for any $e\in E$ is equivalent to $p$ having uphl.\\
$(iv)$ Injectivity of $p_{\ast}: \pi_{1}(E,e)\to \pi_{1}(B,b)$ for any $e\in E$ implies that $p$ has whupl.\\
$(v)$ Injectivity of $p_{\ast}: \Pi E\to \Pi B$ implies that $p$ has wuphl.\\
$(vi)$ Injectivity of $p_{\ast}: \Pi(E,e)\to \Pi(B,b)$ for any $e\in E$ implies that $p$ has hupl.
\end{proposition}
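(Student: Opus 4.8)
The plan is to obtain all six items by unwinding definitions, so the argument reduces to a single genuine computation (item $(ii)$), with the rest being direct translations or formal consequences. For $(i)$ I would fix $e\in E$, set $b=p(e)$, and observe that, since $Pp$ carries a path $\wt{\al}$ with $\wt{\al}(0)=e$ to $p\circ\wt{\al}$, injectivity of $Pp:P(E,e)\lo P(B,b)$ is literally the assertion that $\wt{\al}(0)=\wt{\bt}(0)=e$ and $p\circ\wt{\al}=p\circ\wt{\bt}$ force $\wt{\al}=\wt{\bt}$; quantifying over $e\in E$ this is exactly upl, and conversely upl specialized to paths emanating from $e$ gives injectivity at that basepoint. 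Item $(iii)$ runs on the same bookkeeping with $\Pi$ in place of $P$: the equality $p_*[\wt{\al}]=p_*[\wt{\bt}]$ in $\Pi(B,b)$ unwinds precisely to $p\circ\wt{\al}\simeq p\circ\wt{\bt}\ \mathrm{rel}\ \dot I$, and here one need not impose any condition on $\wt{\al}(1),\wt{\bt}(1)$ by hand — such a homotopy already forces $p(\wt{\al}(1))=p(\wt{\bt}(1))$, while the conclusion $\wt{\al}\simeq\wt{\bt}\ \mathrm{rel}\ \dot I$ of uphl forces $\wt{\al}(1)=\wt{\bt}(1)$ — so the two sides match the statement of uphl exactly once $e$ is allowed to vary.

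The heart of the matter is $(ii)$, which I would prove as a genuine equivalence. Assuming wuphl: a class $[\gamma]$ in $\ker\big(p_*:\pi_1(E,e)\to\pi_1(B,b)\big)$ is represented by a loop $\gamma$ at $e$ with $p\circ\gamma\simeq c_b=p\circ c_e\ \mathrm{rel}\ \dot I$; since $\gamma$ and $c_e$ share both endpoints, wuphl gives $\gamma\simeq c_e\ \mathrm{rel}\ \dot I$, i.e.\ $[\gamma]=1$, so $p_*$ is injective for every $e$. Conversely, assuming $p_*$ injective on each $\pi_1(E,e)$ and given $\wt{\al},\wt{\bt}$ with $\wt{\al}(0)=\wt{\bt}(0)=:e$, $\wt{\al}(1)=\wt{\bt}(1)$ and $p\circ\wt{\al}\simeq p\circ\wt{\bt}\ \mathrm{rel}\ \dot I$, I would pass to the loop $\gamma=\wt{\al}*\wt{\bt}^{-1}$ at $e$, note that $p\circ\gamma=(p\circ\wt{\al})*(p\circ\wt{\bt})^{-1}\simeq(p\circ\wt{\bt})*(p\circ\wt{\bt})^{-1}\simeq c_b\ \mathrm{rel}\ \dot I$, conclude $[\gamma]=1$, and finish with the standard groupoid cancellation $\wt{\al}\simeq(\wt{\al}*\wt{\bt}^{-1})*\wt{\bt}\simeq c_e*\wt{\bt}\simeq\wt{\bt}\ \mathrm{rel}\ \dot I$. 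This ``concatenate, then cancel'' move is the only step that is not a pure tautology, and I expect it to be the main (modest) obstacle; it is also the template one reuses to deduce the remaining items.

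Finally I would get $(iv)$, $(v)$, $(vi)$ from $(ii)$ and $(iii)$ together with the purely logical ordering of the five properties coming from strengthening hypotheses or weakening conclusions: uphl $\Rightarrow$ hupl and uphl $\Rightarrow$ wuphl (the hypothesis $p\circ\wt{\al}=p\circ\wt{\bt}$ is stronger than $p\circ\wt{\al}\simeq p\circ\wt{\bt}\ \mathrm{rel}\ \dot I$, and wuphl carries the extra hypothesis $\wt{\al}(1)=\wt{\bt}(1)$), and likewise wuphl $\Rightarrow$ whupl. Concretely, injectivity of $p_*$ on $\pi_1$ yields wuphl by $(ii)$ and hence whupl, which is $(iv)$; injectivity of $p_*$ on each $\Pi(E,e)$ yields uphl by $(iii)$ and hence hupl, which is $(vi)$; and injectivity of $p_*:\Pi E\to\Pi B$ restricts to injectivity of every $p_*:\Pi(E,e)\to\Pi(B,b)$, hence uphl by $(iii)$, hence wuphl, which is $(v)$. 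The one thing to stay disciplined about throughout is the endpoint bookkeeping — notably that $p\circ\wt{\al}\simeq p\circ\wt{\bt}\ \mathrm{rel}\ \dot I$ together with $\wt{\al}(0)=\wt{\bt}(0)$ does not by itself give $\wt{\al}(1)=\wt{\bt}(1)$, which is precisely why the weak items $(ii)$ and $(v)$ state that hypothesis explicitly while $(iii)$ does not.
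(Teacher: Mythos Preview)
Your proposal is correct and is precisely the kind of direct unwinding of definitions the paper intends when it writes ``By a straightforward verification we have the following results'' in lieu of a proof. Your concatenate--then--cancel argument for $(ii)$ and your deduction of $(iv)$--$(vi)$ from $(ii)$, $(iii)$ via the evident implications among the five lifting properties are exactly what is needed, and the endpoint bookkeeping you flag is handled correctly.
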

It is worth to note that the converse implications of (iv) and (vi) do not hold in general (see Example \ref{E2.8}, part (iv)).\ To see that the converse implication of (v) does not hold, consider the first projection  $pr_1:R^2\lo R$ and the two paths $\wt{\al},\wt{\bt}:I\lo R^2$ given by $\wt{\al}(t)=(t,1)$ and $\wt{\bt}(t)=(t,2)$.\ Obviously, $pr_1\circ\wt{\al}=pr_1\circ\wt{\bt}$ while $\wt{\al}$ and $\wt{\bt}$ do not have the same initial and end points.\

In the next proposition, we show that uniqueness and homotopically uniqueness of path lifting are equivalent.\
\begin{proposition} A map $p:E\to B$ has upl if and only if $p$ has hupl.
\end{proposition}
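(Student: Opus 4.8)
The plan is to notice that one implication is immediate from the definitions, while the other rests on a single observation: applied to a pair of liftings, hupl only produces a homotopy rel $\dot I$, hence agreement of endpoints, so to get genuine equality of paths one must feed hupl \emph{every} initial segment of the two liftings. First, upl $\Rightarrow$ hupl is trivial: if $\wt{\al}(0)=\wt{\bt}(0)$ and $p\circ\wt{\al}=p\circ\wt{\bt}$, then upl forces $\wt{\al}=\wt{\bt}$, so in particular $\wt{\al}\simeq\wt{\bt}$ rel $\dot{I}$.

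For the converse, assume $p$ has hupl and let $\wt{\al},\wt{\bt}:I\to E$ satisfy $\wt{\al}(0)=\wt{\bt}(0)$ and $p\circ\wt{\al}=p\circ\wt{\bt}$. For each $s\in I$ define reparametrized initial segments $\wt{\al}_s,\wt{\bt}_s:I\to E$ by $\wt{\al}_s(t)=\wt{\al}(st)$ and $\wt{\bt}_s(t)=\wt{\bt}(st)$. Then $\wt{\al}_s(0)=\wt{\al}(0)=\wt{\bt}(0)=\wt{\bt}_s(0)$ and $p\circ\wt{\al}_s=p\circ\wt{\bt}_s$, since $p\circ\wt{\al}=p\circ\wt{\bt}$; hence hupl gives $\wt{\al}_s\simeq\wt{\bt}_s$ rel $\dot{I}$. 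A homotopy rel $\dot{I}$ fixes the terminal point, so $\wt{\al}(s)=\wt{\al}_s(1)=\wt{\bt}_s(1)=\wt{\bt}(s)$. As $s\in I$ was arbitrary, $\wt{\al}=\wt{\bt}$, which is exactly upl. (Equivalently, one may run this through the injectivity reformulation of Proposition~\ref{P2.4}(i), but the direct argument above is the shortest.)

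I do not anticipate a genuine obstacle here; the only point requiring care is recognizing that hupl must be invoked for all initial segments rather than once for the full paths, after which the argument is purely formal.
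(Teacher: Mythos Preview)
Your proof is correct and follows essentially the same approach as the paper: the nontrivial direction is handled by applying hupl to the reparametrized initial segments $\wt{\al}_s(t)=\wt{\al}(st)$, $\wt{\bt}_s(t)=\wt{\bt}(st)$ and reading off equality of endpoints. Apart from the roles of the variables $s$ and $t$ being swapped, the argument is identical to the paper's.
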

\begin{proof}By definitions, if $p$ has upl, then $p$ has  hupl.\ Let $p$ have hupl and $\wt{\alpha}\ and\ \wt{\beta}$ be two paths in E with $\wt{\alpha}(0)=\wt{\beta}(0)$, $p\circ\wt{\alpha} = p\circ\wt{\beta}$.\ Define, for every $t\in I$, $\wt{\alpha}_{t},\ \wt{\beta}_{t}:I\to E$ by $\wt{\alpha}_{t}(s)=\wt{\alpha}(st)$ and $\wt{\beta}_{t}(s)=\wt{\beta}(st)$.\ Clearly $\wt{\alpha}_{t}(0)=\wt{\beta}_{t}(0)$ and $p\circ\wt{\alpha}_{t} = p\circ\wt{\beta}_{t}$.\ Since $p$ has hupl, we have $\wt{\alpha}_{t} \simeq\ \wt{\beta}_{t}\ \mathrm{rel}\ \dot{I}$ and so $\wt{\alpha}_{t}(1)=\wt{\beta}_{t}(1)$ which implies that $\wt{\alpha}(t)=\wt{\beta}(t)$.\ Hence $\wt{\alpha}=\wt{\beta}$.\
\end{proof}

Using definition and a similar argument as the above, the following results hold.\
\begin{proposition}\label{p2.6} The following implications hold for any map $p:E\to B$.\\
$(i)$ $upl\Rightarrow whupl$.\\\
$(ii)$ $uphl\Rightarrow whupl$.\\\
$(iii)$ $uphl\Rightarrow wuphl$.\\\
$(iv)$ $uphl\Rightarrow upl$.\\\
$(v)$ $wuphl\Rightarrow whupl$.\
\end{proposition}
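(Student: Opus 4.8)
The plan is to observe that four of the five implications are purely formal consequences of the definitions, the only non-trivial one being $(iv)$. The two elementary facts I would use repeatedly are: first, an equality of paths $p\circ\wt{\al}=p\circ\wt{\beta}$ gives a (constant) homotopy $p\circ\wt{\al}\simeq p\circ\wt{\beta}\ \mathrm{rel}\ \dot{I}$; and second, strengthening the hypotheses of one of these implications yields a weaker statement. For $(i)$, I would simply chain the already-proved equivalence $\mathrm{upl}\Leftrightarrow\mathrm{hupl}$ with $\mathrm{hupl}\Rightarrow\mathrm{whupl}$, the latter being immediate since the hypotheses of whupl contain those of hupl. For $(ii)$ and $(v)$: given the hypotheses of whupl, namely $\wt{\al}(0)=\wt{\beta}(0)$, $\wt{\al}(1)=\wt{\beta}(1)$ and $p\circ\wt{\al}=p\circ\wt{\beta}$, the last equality upgrades to $p\circ\wt{\al}\simeq p\circ\wt{\beta}\ \mathrm{rel}\ \dot{I}$, so the hypotheses of uphl (respectively wuphl) are satisfied and the conclusion $\wt{\al}\simeq\wt{\beta}\ \mathrm{rel}\ \dot{I}$ follows. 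For $(iii)$, the hypotheses of wuphl are exactly those of uphl together with the extra condition $\wt{\al}(1)=\wt{\beta}(1)$, so uphl applies verbatim.

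The substantive step is $(iv)$, $\mathrm{uphl}\Rightarrow\mathrm{upl}$, and here I would reuse the truncation device from the proof that $\mathrm{upl}\Leftrightarrow\mathrm{hupl}$. Assume $p$ has uphl and let $\wt{\al},\wt{\beta}:I\lo E$ satisfy $\wt{\al}(0)=\wt{\beta}(0)$ and $p\circ\wt{\al}=p\circ\wt{\beta}$. For each $t\in I$ define $\wt{\al}_t,\wt{\beta}_t:I\lo E$ by $\wt{\al}_t(s)=\wt{\al}(st)$ and $\wt{\beta}_t(s)=\wt{\beta}(st)$. Then $\wt{\al}_t(0)=\wt{\al}(0)=\wt{\beta}(0)=\wt{\beta}_t(0)$ and $p\circ\wt{\al}_t=p\circ\wt{\beta}_t$ as maps $I\lo B$, hence $p\circ\wt{\al}_t\simeq p\circ\wt{\beta}_t\ \mathrm{rel}\ \dot{I}$ by the constant homotopy. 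Applying uphl gives $\wt{\al}_t\simeq\wt{\beta}_t\ \mathrm{rel}\ \dot{I}$, so in particular their endpoints agree: $\wt{\al}_t(1)=\wt{\beta}_t(1)$, i.e. $\wt{\al}(t)=\wt{\beta}(t)$. Since $t$ is arbitrary, $\wt{\al}=\wt{\beta}$, so $p$ has upl.

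I expect $(iv)$ to be the only place that needs an idea rather than bookkeeping, and the one idea — replacing the pair $(\wt{\al},\wt{\beta})$ by the family of their initial segments, thereby converting a path-homotopy conclusion into pointwise equality of endpoints — carries the whole argument; the remaining checks (that initial points match and that $p$ intertwines the truncations) are routine. All other parts reduce to the remark that equal paths are homotopic rel $\dot{I}$ and that an implication with more hypotheses follows from one with fewer.
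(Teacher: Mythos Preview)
Your proposal is correct and matches the paper's approach: the paper's proof consists only of the sentence ``Using definition and a similar argument as the above, the following results hold,'' where ``the above'' is precisely the truncation device $\wt{\al}_t(s)=\wt{\al}(st)$ from the proof of $\mathrm{upl}\Leftrightarrow\mathrm{hupl}$. You have faithfully unpacked that sentence, and your identification of $(iv)$ as the only part needing this device (rather than pure bookkeeping) is accurate; one could also obtain $(iv)$ by the chain $\mathrm{uphl}\Rightarrow\mathrm{hupl}\Rightarrow\mathrm{upl}$, the first arrow being trivial and the second already established, but your direct argument is the same in substance.
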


A map $p:E\lo B$ is said to have the \emph{unique lifting property} with respect to a space $X$ if by given two liftings $f,g:X\lo E$ of the same map (that is $p\circ f=p\circ g$) such that agrees for some point of $X$, we have $f=g$.
Since maps with upl have unique lifting property with respect to path connected spaces \cite [Lemma 2.2.4]{SP}, the following result is a consequence of the implication $uphl\Rightarrow upl$.\
\begin{corollary}If a map has uphl, it has the unique lifting property with respect to path connected spaces.\
\end{corollary}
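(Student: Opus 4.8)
The plan is to reduce the statement to the classical fact, already recalled just before the corollary, that a map with \emph{upl} has the unique lifting property with respect to path connected spaces (\cite[Lemma 2.2.4]{SP}). Indeed, by Proposition \ref{p2.6}$(iv)$ we have the implication $uphl\Rightarrow upl$, so any $p:E\lo B$ with \emph{uphl} automatically has \emph{upl}, and the conclusion follows immediately by chaining these two facts. So the bulk of the work is already done; what remains is just to cite the two ingredients in the right order.

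If one prefers a self-contained argument rather than quoting \cite[Lemma 2.2.4]{SP}, I would spell out the one-line proof of that lemma using \emph{upl} directly. Suppose $p$ has \emph{uphl} (hence \emph{upl}), let $X$ be path connected, and let $f,g:X\lo E$ be two liftings of a map $h:X\lo B$ (so $p\circ f=p\circ g=h$) with $f(x_{0})=g(x_{0})$ for some $x_{0}\in X$. Given an arbitrary $x\in X$, choose a path $\gamma:I\lo X$ from $x_{0}$ to $x$; then $f\circ\gamma$ and $g\circ\gamma$ are paths in $E$ with the same initial point $f(x_{0})=g(x_{0})$ and with $p\circ(f\circ\gamma)=h\circ\gamma=p\circ(g\circ\gamma)$. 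By \emph{upl}, $f\circ\gamma=g\circ\gamma$, and evaluating at $1$ gives $f(x)=g(x)$. Since $x$ was arbitrary, $f=g$.

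There is essentially no genuine obstacle here: the only point where path connectedness of $X$ is used is in producing the connecting path $\gamma$ from the basepoint $x_{0}$ to a general point $x$, which is exactly why the hypothesis cannot be dropped. I would therefore keep the proof to a couple of sentences, invoking Proposition \ref{p2.6}$(iv)$ and \cite[Lemma 2.2.4]{SP}, and perhaps add the short direct verification above as a remark for completeness.
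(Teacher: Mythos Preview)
Your proposal is correct and follows exactly the same route as the paper: the corollary is presented there as an immediate consequence of the implication $uphl\Rightarrow upl$ from Proposition~\ref{p2.6}(iv) together with \cite[Lemma 2.2.4]{SP}. Your optional self-contained verification of the Spanier lemma is a nice addition but not needed for parity with the paper.
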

 Following examples show that the converse of implications in Proposition \ref{p2.6} do not hold.\
\begin{example}\label{E2.8}
\ \\$(i)$ $wuphl\nRightarrow uphl$.\ Let $E=\{{0}\}\times [0,1]\times [0,1]$ and $B=\{{0}\}\times [0,1]\times \{{0}\}$.\ If $p:E\lo B$ is the vertical projection from $E$ onto $B$, then $p$ has wuphl since E is simply connected.\ But $p$ does not have uphl.\ For if $\wt{\alpha},\wt{\beta}:I\lo E$ are two paths with $\wt{\alpha}(t)=(0,0,\frac{t}{2})$ and $\wt{\beta}(t)=(0,0,t)$ and  $\alpha:I\lo B$ is the constant path at $(0,0,0)$, then $\wt{\alpha}(0)=(0,0,0)=\wt{\beta}(0)$ and $p\circ\wt{\alpha}=\alpha=p\circ\wt{\beta}$ while $\wt{\alpha}$ and $\wt{\beta}$ are not path homotopic.\\\
$(ii)$  $whupl\nRightarrow uphl$.\ Using the same example as (i).\\\
$(iii)$ $whupl\nRightarrow upl$.\ Using the same example as (i).\\\
$(iv)$ $upl\nRightarrow uphl$.\ Let $E=\{{(x,y,2)\in R^{3}}\} -\{{(0,0,2)}\}$, $B=\{{(x,y,0)}\in R^{3}\}$ and $p:E\lo B$ be the vertical projection.\
\\ $(v)$ $whupl\nRightarrow wuphl$.\ Using the same example as (iv).\
\end{example}

Note that among the results of this section, there are no relationship between upl and wuphl.\ In the following example, we show that neither of the two properties implies the other.\

\begin{example}The map example introduced in Example \ref{E2.8}, (iv), has upl but does not have wuphl.\ Conversely, let $p:{\{0}\}\times [0,1]\to {\{0}\}$ be the constant map, then p has wuphl but it does not have upl.\
\end{example}

We can summarize the results of this section in the following diagram.\\\\\\\\

\unitlength 1mm 
\linethickness{0.4pt}
\ifx\plotpoint\undefined\newsavebox{\plotpoint}\fi 
\begin{picture}(93.192,62.102)(0,0)
\put(38.573,58.081){uphl}
\put(84.655,58.23){wuphl}
\put(84.952,23.743){whupl}
\put(32.629,23.588){upl (hupl)}
\put(45.103,51.49){\vector(0,-1){19.814}}
\put(89.092,51.542){\vector(0,-1){19.814}}
\put(42.055,31.571){\vector(0,1){19.919}}
\put(92.193,31.624){\vector(0,1){19.919}}
\multiput(43.053,42.555)(-.0670345,-.0326207){29}{\line(-1,0){.0670345}}
\multiput(93.192,42.608)(-.067069,-.0326207){29}{\line(-1,0){.067069}}
\put(51.147,58.144){\vector(1,0){32.112}}
\put(83.096,61.136){\vector(-1,0){31.886}}
\multiput(68.157,62.102)(-.03345,-.03343333){60}{\line(-1,0){.03345}}
\multiput(65.897,24.252)(-.03345,-.03345){60}{\line(-1,0){.03345}}
\put(83.259,23.582){\vector(-1,0){32.112}}
\put(51.2,26.578){\vector(1,0){31.954}}
\put(84.063,51.765){\vector(3,4){.07}}\qbezier(50.126,28.703)(71.657,36.921)(84.063,51.765)
\put(48.063,30.765){\vector(-3,-4){.07}}\qbezier(82.063,53.828)(61.438,46.984)(48.063,30.765)
\put(51.001,53.828){\vector(-3,1){.07}}\qbezier(84.126,31.703)(72.813,46.14)(51.001,53.828)
\put(82.126,29.703){\vector(3,-1){.07}}\qbezier(49.001,51.765)(59.313,38.796)(82.126,29.703)
\multiput(71.063,44.89)(-.0841154,-.0336538){26}{\line(-1,0){.0841154}}
\put(72.063,39.515){\line(-1,0){2.563}}
\put(64.126,44.453){\line(-1,0){2.063}}
\end{picture}

\section{Fibrations and Homotopically Liftings}
 In the classic book of Spanier \cite [Chapter 2]{SP} one can find a considerable studies on fibrations with unique path lifting property.\ In this section, we intend to study and compare fibrations with the various kinds of homotopically unique path lifting properties introduced in Section 2.\

 Examples 2.8\ (iv)\ and 2.9\ show that the two implications $upl\ (hupl)\ \Rightarrow uphl$ and
$upl\ (hupl)\ \Rightarrow wuphl$ do not hold in general.\ In the following proposition we show that these two implications hold with the presence of fibrations.\
\begin{proposition}\label{P3.1} For fibrations the following implications hold.\
\\(i)\ upl (hupl) $ \Rightarrow uphl$
\\(ii)\ upl (hupl) $ \Rightarrow wuphl$
\begin{proof}For (i) see \cite [Lemma 2.3.3]{SP}.\ Part (ii) comes from the definitions and part (i).\
\end{proof}
\end{proposition}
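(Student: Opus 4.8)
The plan is to prove (i) directly from the homotopy lifting property together with unique path lifting, and then to read off (ii) with no extra work. By the equivalence of upl and hupl established above, I may assume $p:E\to B$ is a fibration with upl. Suppose $\wt{\alpha},\wt{\beta}$ are paths in $E$ with $\wt{\alpha}(0)=\wt{\beta}(0)$ and let $H:I\times I\to B$ be a homotopy rel $\dot{I}$ from $p\circ\wt{\alpha}$ to $p\circ\wt{\beta}$. The first step is to regard $\wt{\alpha}:I\to E$ as a partial lift on $I\times\{0\}$ and apply the homotopy lifting property of $p$ with respect to the space $I$ (with $J_0:I\to I\times I$, $J_0(s)=(s,0)$), which yields $\wt{H}:I\times I\to E$ with $\wt{H}(s,0)=\wt{\alpha}(s)$ and $p\circ\wt{H}=H$. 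This is essentially the argument of \cite[Lemma 2.3.3]{SP}.

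The second step is the edge analysis, where unique path lifting does the work. Since $H$ is rel $\dot{I}$, its restriction to $\{0\}\times I$ is the constant path at $p(\wt{\alpha}(0))$, and $t\mapsto\wt{H}(0,t)$ is a lift of it starting at $\wt{\alpha}(0)$; as $c_{\wt{\alpha}(0)}$ is another such lift, upl forces $\wt{H}(0,t)=\wt{\alpha}(0)$ for all $t$. The same reasoning on $\{1\}\times I$ gives $\wt{H}(1,t)=\wt{\alpha}(1)$ for all $t$. For the top edge, $s\mapsto\wt{H}(s,1)$ lies over $H(\cdot,1)=p\circ\wt{\beta}$ and, by the left-edge computation, starts at $\wt{H}(0,1)=\wt{\alpha}(0)=\wt{\beta}(0)$; since $\wt{\beta}$ is also a lift of $p\circ\wt{\beta}$ with this initial point, upl gives $\wt{H}(s,1)=\wt{\beta}(s)$ for all $s$. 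Hence $\wt{H}$ is a homotopy from $\wt{\alpha}$ to $\wt{\beta}$ whose vertical edges are constant, i.e. $\wt{\alpha}\simeq\wt{\beta}$ rel $\dot{I}$ (and $\wt{\alpha}(1)=\wt{H}(1,1)=\wt{\beta}(1)$ emerges as a byproduct), proving (i).

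For (ii) there is nothing left to do: the hypothesis of wuphl is stronger than that of uphl, adding $\wt{\alpha}(1)=\wt{\beta}(1)$, so (i) already delivers the conclusion; equivalently, one quotes $uphl\Rightarrow wuphl$ from Proposition \ref{p2.6}. I do not expect a real obstacle here. The only two points needing care are invoking the homotopy lifting property against the correct test space (namely $I$, not a point) and using unique path lifting in the precise form ``a constant path is the unique lift, with prescribed initial point, of a constant path'' on the vertical edges — both routine once the lifted square $\wt{H}$ is in hand.
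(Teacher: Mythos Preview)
Your proof is correct and follows the same route as the paper: the paper simply cites \cite[Lemma 2.3.3]{SP} for (i) and then obtains (ii) from (i) together with the implication $uphl\Rightarrow wuphl$, whereas you spell out the standard proof of that lemma (lift the homotopy, use upl on the three free edges). There is no substantive difference.
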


The following corollary is a consequence of the above result and Proposition 2.6 (iv).\
\begin{corollary}\label{C3.2} For fibrations, upl (hupl) and uphl are equivalent.\
\end{corollary}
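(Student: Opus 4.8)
The plan is to obtain the equivalence simply by concatenating two one-directional implications that are already available, so essentially no new argument is needed. One direction holds for arbitrary maps and therefore in particular for fibrations: by Proposition~\ref{p2.6}(iv) we have $uphl\Rightarrow upl$, and since $upl$ and $hupl$ are equivalent for every map (the proposition preceding Proposition~\ref{p2.6}), this already gives $uphl\Rightarrow upl\ (hupl)$ with no fibration hypothesis at all.

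For the reverse direction I would invoke Proposition~\ref{P3.1}(i), which is exactly the assertion that a fibration with $upl\ (hupl)$ has $uphl$. Chaining the two implications yields $upl\ (hupl)\Leftrightarrow uphl$ for fibrations, which is the content of the corollary. So the write-up is two lines: cite Proposition~\ref{P3.1}(i) for one inclusion and Proposition~\ref{p2.6}(iv) for the other.

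I do not expect any genuine obstacle: all the real work has been pushed into Proposition~\ref{P3.1}(i), whose proof is in turn delegated to \cite[Lemma~2.3.3]{SP}. If one wanted a self-contained argument, the only step needing to be reproved would be $upl\Rightarrow uphl$ for fibrations, and it would go as follows. Given paths $\wt{\alpha},\wt{\beta}$ in $E$ with $\wt{\alpha}(0)=\wt{\beta}(0)$ and a homotopy rel $\dot{I}$ from $p\circ\wt{\alpha}$ to $p\circ\wt{\beta}$, one lifts this homotopy through the fibration starting from $\wt{\alpha}$ (using the homotopy lifting property); the final path of the lifted homotopy is a lift of $p\circ\wt{\beta}$ with the same initial point as $\wt{\beta}$, hence equals $\wt{\beta}$ by $upl$, and the lifted homotopy then witnesses $\wt{\alpha}\simeq\wt{\beta}$ rel $\dot{I}$ — the only delicate point being the verification (as in \cite[Lemma~2.3.3]{SP}) that the lifted homotopy can be arranged to be rel $\dot{I}$, which is where the fibration hypothesis is genuinely used.
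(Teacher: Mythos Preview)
Your proposal is correct and matches the paper's own argument exactly: the paper states that the corollary ``is a consequence of the above result [Proposition~\ref{P3.1}(i)] and Proposition~\ref{p2.6}(iv),'' which is precisely the two-line chaining you describe. Your additional sketch of the self-contained argument for $upl\Rightarrow uphl$ is a nice bonus but not needed here.
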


In the following example, we show that the converse of Proposition 3.1 (ii) does not hold. Note that fibrations with unique path lifting which are generalizations of covering maps, has no nonconstant path in their fibers. In fact, for fibrations, this is equivalent to the unique path lifting (see \cite [Theorem 2.2.5]{SP}).
\begin{example}\label{E3.3}  Let $p:X\times Y\lo X$ be the projection which is a fibration, where $Y$ is a non-singleton simply connected space.\ If $x\in X$, then the fiber over $x$,  $p^{-1}(x)=\{{x}\}\times Y$ is homeomorphic to Y and so every fiber has a nonconstant path which implies that $p$ does not have upl.\ To show that $p$ has wuphl, let $\wt{\alpha}, \wt{\beta}:I\lo X\times Y$ be two homotopically lifting of a path $\alpha:I\to X$ with $\wt{\alpha}(0)=\wt{\beta}(0)=(x_{0},y_{0})\in p^{-1}(x_{0})$, where $x_{0}=\alpha(0)$ and $\wt{\alpha}(1)=\wt{\beta}(1)$.\ Then $\wt{\alpha}=(\wt{\alpha}_{1},\wt{\alpha}_{2})$, $\wt{\beta}=(\wt{\beta}_{1},\wt{\beta}_{2})$, where $\wt{\alpha}_{1},\wt{\beta}_{1}$ are paths in $X$ with $\wt{\alpha}_{1}(0)=\wt{\beta}_{1}(0)=x_{0}$ and $\wt{\alpha}_{2}$, $\wt{\beta}_{2}$ are paths in $Y$ with $\wt{\alpha}_{2}(0)=\wt{\beta}_{2}(0)=y_{0}$.\ If $p\circ\wt{\alpha}\simeq p\circ\wt{\beta}\simeq \alpha$ rel\ $\dot{I}$, then $\wt{\alpha}_{1}\simeq\wt{\beta}_{1}$ rel\ $\dot{I}$.\ Since $\wt{\alpha}(1)=\wt{\beta}(1)$, we have $\wt{\alpha}_{2}(1)=\wt{\beta}_{2}(1)$ and hence $\wt{\alpha}_{2}\ast\wt{\beta}_{2}^{-1}$ is a loop in $Y$ at $y_{0}$.\ Simply connectedness of $Y$ implies that $\wt{\alpha}_{2}\simeq\wt{\beta}_{2}$ rel\ $\dot{I}$.\ Thus $\wt{\alpha}\simeq\wt{\beta}$\ rel\ $\dot{I}$.\
\end{example}
In the following theorem, we show that considering unique path lifting problem in the homotopy category makes all paths in fibers homotopically constant, i.e.\ nullhomotopic.
\begin{theorem}\label{T3.4}
If $p:E\lo B$ is a fibration, then $p$ has wuphl if and only if every loop in each fiber is nullhomotopic.\
\end{theorem}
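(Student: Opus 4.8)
The plan is to prove both directions by relating loops in a fiber to the comparison of two path liftings. For the forward direction, suppose $p$ has wuphl and let $\gamma$ be a loop in the fiber $p^{-1}(b)$ based at some point $e \in E$. Take $\wt{\alpha} = \gamma$ and $\wt{\beta} = c_e$, the constant path at $e$. Then $\wt{\alpha}(0) = e = \wt{\beta}(0)$, $\wt{\alpha}(1) = e = \wt{\beta}(1)$, and $p \circ \wt{\alpha} = c_b = p \circ \wt{\beta}$, so trivially $p \circ \wt{\alpha} \simeq p \circ \wt{\beta}$ rel $\dot I$. By wuphl, $\gamma = \wt{\alpha} \simeq \wt{\beta} = c_e$ rel $\dot I$, i.e. $\gamma$ is nullhomotopic. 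This direction is immediate and does not use the fibration hypothesis.

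For the converse, assume every loop in each fiber is nullhomotopic, and let $\wt{\alpha}, \wt{\beta} : I \lo E$ be paths with $\wt{\alpha}(0) = \wt{\beta}(0)$, $\wt{\alpha}(1) = \wt{\beta}(1)$, and $p \circ \wt{\alpha} \simeq p \circ \wt{\beta}$ rel $\dot I$. The key step is to use the homotopy lifting property to replace $\wt{\beta}$ by a path that lies over the \emph{same} path in $B$ as $\wt{\alpha}$. Let $H : I \times I \lo B$ be a homotopy rel $\dot I$ from $p \circ \wt{\beta}$ to $p \circ \wt{\alpha}$. Using the HLP of $p$ with the initial lift $\wt{\beta}$ on $I \times \{0\}$, lift $H$ to $\wt{H} : I \times I \lo E$ with $\wt{H}(-, 0) = \wt{\beta}$. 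Set $\wt{\beta}' = \wt{H}(-, 1)$; then $p \circ \wt{\beta}' = p \circ \wt{\alpha}$. One must check that $\wt{\beta}' \simeq \wt{\beta}$ rel $\dot I$: the tracks $t \mapsto \wt{H}(0, t)$ and $t \mapsto \wt{H}(1, t)$ lie over the constant paths $H(0, -)$ and $H(1, -)$ (since $H$ is rel $\dot I$), hence are loops in fibers, hence nullhomotopic by hypothesis; a standard square-reparametrization argument then shows $\wt{\beta}'$ is homotopic rel $\dot I$ to $\wt{\beta}$ (the end tracks, being nullhomotopic, can be collapsed). In particular $\wt{\beta}'(0) = \wt{\beta}(0) = \wt{\alpha}(0)$ and $\wt{\beta}'(1) = \wt{\beta}(1) = \wt{\alpha}(1)$.

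Now $\wt{\alpha}$ and $\wt{\beta}'$ are two paths with the same endpoints satisfying $p \circ \wt{\alpha} = p \circ \wt{\beta}'$, so $\wt{\alpha} * (\wt{\beta}')^{-1}$ is a loop in $E$ whose image under $p$ is a nullhomotopic loop in $B$; more to the point, I want to conclude $\wt{\alpha} \simeq \wt{\beta}'$ rel $\dot I$. To get this I lift a nullhomotopy: the loop $p \circ \wt{\alpha} = p \circ \wt{\beta}'$ need not be constant, so instead consider the homotopy $G : I \times I \lo B$ that is the ``stationary'' homotopy $G(s,t) = p(\wt{\alpha}(s))$, and lift it via the HLP with initial lift $\wt{\alpha}$; comparing with the lift having initial condition $\wt{\beta}'$, and using again that the boundary tracks are nullhomotopic fiber loops, yields $\wt{\alpha} \simeq \wt{\beta}'$ rel $\dot I$. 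Chaining with $\wt{\beta}' \simeq \wt{\beta}$ rel $\dot I$ gives $\wt{\alpha} \simeq \wt{\beta}$ rel $\dot I$, as required.

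The main obstacle is the bookkeeping in the square-reparametrization arguments: one repeatedly has a lifted homotopy whose two ``vertical'' boundary edges are fiber loops rather than constant paths, and must argue that nullhomotopy of those edges lets one straighten the homotopy into one that is rel $\dot I$. This is the homotopy analogue of the uniqueness argument in \cite[Lemma 2.3.3, Theorem 2.2.5]{SP}, and the cleanest route is probably to phrase everything in terms of the induced map $p_* : \pi_1(E,e) \lo \pi_1(B,b)$ and invoke Proposition \ref{P2.4}(ii): showing ``every fiber loop nullhomotopic'' $\Leftrightarrow$ ``$p_*$ injective for fibrations'' reduces to the standard fact that for a fibration the fiber inclusion gives an exact sequence $\pi_1(F) \lo \pi_1(E) \lo \pi_1(B)$, so $\ker p_*$ is exactly the image of $\pi_1$ of the fiber, which is trivial precisely when every fiber loop is nullhomotopic.
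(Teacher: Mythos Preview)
Your forward direction matches the paper's. In the converse, however, there is a real gap: you assert that the vertical tracks $t \mapsto \wt H(0,t)$ and $t \mapsto \wt H(1,t)$ ``are loops in fibers, hence nullhomotopic by hypothesis.'' They do lie in fibers (since $H$ is rel $\dot I$), but they are only \emph{paths}, not loops: $\wt H(0,0)=\wt\beta(0)$ while $\wt H(0,1)=\wt\beta'(0)$, and nothing forces these to coincide---that would amount to unique path lifting, which is precisely what we do not have. The hypothesis kills only fiber \emph{loops}, so you cannot collapse these edges, cannot conclude $\wt\beta'\simeq\wt\beta$ rel $\dot I$, and cannot even conclude that $\wt\beta'$ and $\wt\beta$ share endpoints. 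The same issue recurs when you lift the stationary homotopy in the next paragraph.

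The paper avoids this by first concatenating: set $\gamma=\wt\alpha^{-1}*\wt\beta$, a loop at $\tilde x_0=\wt\alpha(1)$ with $p\circ\gamma\simeq c_{x_0}$ rel $\dot I$. Lift this nullhomotopy to a square $\wt F$ with bottom edge $\gamma$. The three remaining edges, concatenated as $\eta=\wt F(0,-)*\wt F(-,1)*(\wt F(1,-))^{-1}$, form a \emph{single} loop based at $\tilde x_0$, and since each edge is sent by $p$ to a constant path, $\eta$ lies entirely in a fiber. The square itself witnesses $\gamma\simeq\eta$ rel $\dot I$; the hypothesis then gives $\eta\simeq c_{\tilde x_0}$, hence $\gamma\simeq c_{\tilde x_0}$ and $\wt\alpha\simeq\wt\beta$. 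By packaging all three free edges into one closed loop, the hypothesis is invoked exactly once, for a genuine loop.

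Your closing remark, by contrast, is correct and is in fact the cleanest route. By Proposition~\ref{P2.4}(ii), wuphl is equivalent to injectivity of $p_*:\pi_1(E,e)\to\pi_1(B,b)$ for every $e$; exactness of $\pi_1(F,e)\to\pi_1(E,e)\to\pi_1(B,b)$ identifies $\ker p_*$ with the image of $\pi_1(F,e)$, and this image vanishes for all $e$ precisely when every fiber loop is nullhomotopic in $E$. That handles both directions at once and is shorter than the paper's hands-on argument; the trade-off is that the paper's proof is self-contained and does not invoke the long exact sequence.
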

\begin{proof}
First, assume that $p$ has wuphl and $\alpha:I\lo p^{-1}(b_{0})$ is a loop in the fiber over $b_{0}$ in $E$ which implies that $p\circ\alpha=c_{b_{0}}$, where $c_{b_{0}}$ is the constant path at $b_{0}$.\ Also, we have $p\circ c_{\alpha(0)}=c_{b_{0}}$, $\alpha(0)=c_{\alpha(0)}(0)$ and $\alpha(1)=c_{\alpha(0)}(1)$.\ Then $\alpha\simeq c_{\alpha(0)}$\ rel\ $\dot{I}$,\ since $p$ has wuphl which implies that $\alpha$ is nullhomotopic.

Conversely, let $\wt{\alpha},\wt{\beta}:I\lo E$ be two paths with $\wt{\alpha}(0)=\wt{\beta}(0)$, $\wt{\alpha}(1)=\wt{\beta}(1)$ and $p\circ\wt{\al}\simeq p\circ\wt{\bt}$,\ rel\ $\dot{I}$.\ We show that $\wt{\alpha}\simeq\wt{\beta}$\ rel\ $\dot{I}$.\ Let $\gamma:=\wt{\alpha}^{-1}*\wt{\beta}$ which is a loop at $\wt{\alpha}(1)=\wt{\beta}(1)$.\ Put $\tilde{x}_0=\wt{\al}(1)$ and $x_0=p(\tilde{x}_0)$, then we have $$p\circ\gamma=p\circ(\wt{\alpha}^{-1}*\wt{\beta})=(p\circ\wt{\alpha}^{-1})*(p\circ\wt{\beta})=(p\circ\wt{\alpha})^{-1}*(p\circ\wt{\beta})\simeq c_{x_0}\ \mathrm{rel}\ \dot{I}.$$
Let $F:p\circ\gamma\simeq c_{x_0}$\ rel\ $\dot{I}$.\ Since $p$ is a fibration, there exists a homotopy $\wt{F}:I\times I\lo E$ such that $p\circ\wt{F}=F$ and $\wt{F}\circ J_0=\gamma$.\ Put $f:=\wt{F}(0,-), g:=\wt{F}(- ,1)$ and $h:=\wt{F}(1,- )$.\ Then $f,g$ and $h$ are paths in $E$ with $f(1)=g(0)$ and $g(1)=h(1)=h^{-1}(0)$.\ Define $\eta:=f*g*h^{-1}$ which is a loop at $\wt{\alpha}(1)$ since  $$\eta(0)=f(0)=\wt{F}(0,0)=\wt{F}\circ J_0(0)=\gamma(0)=\wt{\alpha}^{-1}(0)=\wt{\alpha}(1),$$
$$\eta(1)=h^{-1}(1)=h(0)=\wt{F}(1,0)=\wt{F}\circ J_0(1)=\gamma(1)=\wt{\beta}(1)=\wt{\alpha}(1).$$
Also, we have $$p\circ\eta=(p\circ f)*(p\circ g)*(p\circ h)^{-1}=$$
$$F(0,-)*F(-,1)*(F(1,-))^{-1} =c_{x_0}*c_{x_0}*(c_{x_0})^{-1}$$
 which implies that $\eta$ is a loop in a fiber.\ By assumption, $\eta$ is nullhomotopic, namely, $\eta\simeq c_{\tilde{x}_{0}}$\ rel\ $\dot{I}$.\ On the other hand, if we show that $\eta*\gamma\simeq c_{\tilde{x}_{0}}$ rel\ $\dot{I}$, then $\gamma\simeq c_{\tilde{x}_{0}}$ rel\ $\dot{I}$ and so $\wt{\alpha}\simeq\wt{\beta}$,\ rel\ $\dot{I}$.\ Since $\wt{F}$ is a homotopy such that $\wt{F}|_{I\times {0}}=\gamma$ and $\wt{F}|_{(\{{0}\}\times I)\cup (I\times \{{1}\})\cup(\{{1}\}\times I)}=\eta$, it is obvious that $\eta\ast \gamma \simeq c_{\tilde{x}_{0}}$\ rel\ $\dot{I}.$
\end{proof}
By a similar proof to the above, we can replace wuphl with whupl.\
\begin{theorem}\label{T3.5}
A fibration $p:E\lo B$ has whupl if only if every loop in each fiber is nullhomotopic.\
\end{theorem}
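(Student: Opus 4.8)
The plan is to run the argument of Theorem \ref{T3.4} almost verbatim, exploiting that the hypothesis defining whupl (namely $p\circ\wt{\alpha}=p\circ\wt{\beta}$, an honest equality) is stronger than the one defining wuphl, so that nothing gets harder along the way; in fact the proof should only simplify.

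For the ``only if'' direction I would argue exactly as in the forward direction of Theorem \ref{T3.4}: given a loop $\alpha:I\lo p^{-1}(b_{0})$ in a fiber, one has $p\circ\alpha=c_{b_{0}}=p\circ c_{\alpha(0)}$, and $\alpha$ and $c_{\alpha(0)}$ share both endpoints, so whupl yields $\alpha\simeq c_{\alpha(0)}$\ rel\ $\dot{I}$, i.e.\ $\alpha$ is nullhomotopic.

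For the ``if'' direction, suppose every loop in every fiber is nullhomotopic and let $\wt{\alpha},\wt{\beta}:I\lo E$ satisfy $\wt{\alpha}(0)=\wt{\beta}(0)$, $\wt{\alpha}(1)=\wt{\beta}(1)$ and $p\circ\wt{\alpha}=p\circ\wt{\beta}=:\delta$. Put $\gamma:=\wt{\alpha}^{-1}*\wt{\beta}$, a loop at $\tilde{x}_{0}:=\wt{\alpha}(1)$; then $p\circ\gamma=\delta^{-1}*\delta$, which is nullhomotopic rel $\dot{I}$ via the standard ``out-and-back'' homotopy $F:I\times I\lo B$. From here the proof of Theorem \ref{T3.4} carries over word for word: lift $F$ through the fibration $p$ to $\wt{F}:I\times I\lo E$ with $\wt{F}\circ J_{0}=\gamma$; set $\eta:=\wt{F}(0,-)*\wt{F}(-,1)*\wt{F}(1,-)^{-1}$, a loop at $\tilde{x}_{0}$; observe that $p\circ\eta$ is constant, so $\eta$ lies in a fiber and is therefore nullhomotopic by hypothesis; and read off from the square $\wt{F}$ that $\eta*\gamma\simeq c_{\tilde{x}_{0}}$\ rel\ $\dot{I}$, whence $\gamma\simeq c_{\tilde{x}_{0}}$\ rel\ $\dot{I}$ and finally $\wt{\alpha}\simeq\wt{\beta}$\ rel\ $\dot{I}$.

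The only delicate point — and it is already dispatched in Theorem \ref{T3.4} — is the combinatorial bookkeeping for the lifted square: verifying the endpoint matchings among the three sides $\wt{F}(0,-)$, $\wt{F}(-,1)$, $\wt{F}(1,-)$, that these concatenate to $\eta$ while the bottom side is $\gamma$, and that $\wt{F}$ accordingly exhibits $\eta*\gamma\simeq c_{\tilde{x}_{0}}$\ rel\ $\dot{I}$. I expect no genuine obstacle here: because $p\circ\wt{\alpha}=p\circ\wt{\beta}$ holds on the nose, one may take $F$ to be the canonical nullhomotopy of $\delta^{-1}*\delta$, so this argument is, if anything, marginally cleaner than the one for Theorem \ref{T3.4}.
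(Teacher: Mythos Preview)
Your proposal is correct and matches the paper's approach exactly: the paper's proof of Theorem \ref{T3.5} is the single sentence ``By a similar proof to the above, we can replace wuphl with whupl,'' and you have faithfully spelled out precisely that, noting (correctly) that the only change is that $p\circ\wt{\alpha}=p\circ\wt{\beta}$ now holds on the nose, which only simplifies matters. No gaps.
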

\begin{corollary}\label{C3.6}
If $p:E\to B$ is a fibration, then whupl and wuphl are equivalent.\
\end{corollary}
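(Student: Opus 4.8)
The plan is to read this off directly from the two characterization theorems that immediately precede it, since each of whupl and wuphl has just been shown, for a fibration, to be equivalent to the single condition that every loop in each fiber be nullhomotopic. So the corollary should require essentially no new argument.

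First I would dispose of the easy direction, which needs no fibration hypothesis at all: by Proposition~\ref{p2.6}(v) we already know $wuphl\Rightarrow whupl$ for an arbitrary map $p:E\lo B$. Hence only the reverse implication remains, and that is where the fibration assumption enters.

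For the reverse implication, suppose $p:E\lo B$ is a fibration with whupl. Applying Theorem~\ref{T3.5}, every loop in each fiber of $p$ is nullhomotopic. Feeding this fact back into the ``if'' direction of Theorem~\ref{T3.4}, we conclude that $p$ has wuphl. Therefore whupl and wuphl agree on the class of fibrations, which is the assertion of the corollary.

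I expect no genuine obstacle here: the statement is a formal consequence of Theorems~\ref{T3.4} and~\ref{T3.5}, and all the real work sits in the homotopy-lifting construction inside the proof of Theorem~\ref{T3.4}. If one preferred not to invoke the fiber characterizations, the only alternative would be to re-run that lifting argument starting from the weaker whupl hypothesis; but since whupl already forces loops in fibers to be nullhomotopic (Theorem~\ref{T3.5}), such an approach would merely reproduce the existing proofs, so quoting Theorems~\ref{T3.4} and~\ref{T3.5} is the natural route.
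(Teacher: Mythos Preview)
Your argument is correct and matches the paper's intended reasoning: Corollary~\ref{C3.6} is stated immediately after Theorems~\ref{T3.4} and~\ref{T3.5} precisely because it follows by combining them (together with the trivial direction $wuphl\Rightarrow whupl$ from Proposition~\ref{p2.6}(v)). No further work is needed.
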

\begin{remark}
Note that the converse of Corollary \ref{C3.6} does not necessarily hold.\ As an example, if $p:\{{*}\} \lo I$ is the constant map $*\mapsto 0$, then $p$ has wuphl and whupl but $p$ is not a fibration.\ To see this, let $\wt{f}:X\lo {\{\ast}\}$ defined by $x\mapsto *$ and $F:X\times I\to I$ defined by $ F(x,t)=t$, then $p\circ\wt{f}=F\circ J_0$.\ But there is no map $\wt{F}:X\times I \lo \{{*}\}$ such that $p\circ\wt{F}=F$ because $p\circ\wt{F}(x,0.5)=p(*)=0$ but $F(x,0.5)=0.5$.\
\end{remark}
It is known that if $p:E\to B$ is a fibration with upl, then the induced homomorphism by $p$, $p_{*}:\pi_{1}(E,e_{0})\to \pi_{1}(B,b_{0})$ is a monomorphism \cite [Theorem 2.3.4]{SP}.\ By Proposition \ref{P2.4} and Corollary \ref{C3.6}, we have a similar result for fibrations with whupl.\
\begin{corollary} If $p: (E,e_{0})\lo (B,b_{0})$ is a fibration, then whupl is equivalent to injectivity of\ $p_{\ast}: \pi_{1}(E,e_{0})\lo \pi_{1}(B,b_{0})$.\
\end{corollary}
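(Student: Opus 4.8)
The plan is to deduce this purely by assembling results already in hand, with the fibration hypothesis entering only through Corollary \ref{C3.6}. The backbone is the equivalence of wuphl with injectivity of $p_*$ on fundamental groups (Proposition \ref{P2.4}(ii)) together with the equivalence of wuphl and whupl for fibrations (Corollary \ref{C3.6}).

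For the direction ``injectivity of $p_*$ at $e_0$ $\Rightarrow$ whupl'', I would invoke Proposition \ref{P2.4}(iv). The only subtlety is that that proposition is phrased with injectivity of $p_*$ at \emph{every} point of $E$, whereas here it is assumed only at $e_0$. Since every space in the paper is path connected, choosing a path in $E$ from $e_0$ to an arbitrary $e$ yields change-of-basepoint isomorphisms on $\pi_1(E,-)$ and on $\pi_1(B,-)$ giving a commutative square that relates the two instances of $p_*$; hence injectivity at $e_0$ forces injectivity at every $e\in E$, and Proposition \ref{P2.4}(iv) then applies as stated. Alternatively, one can bypass (iv) altogether for this direction: injectivity at all basepoints is exactly wuphl by Proposition \ref{P2.4}(ii), and wuphl $\Rightarrow$ whupl already holds for arbitrary maps by Proposition \ref{p2.6}(v).

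For the converse, I would assume $p$ has whupl. Because $p$ is a fibration, Corollary \ref{C3.6} — itself resting on Theorems \ref{T3.4} and \ref{T3.5}, i.e.\ on the characterization of both properties by ``every loop in each fiber is nullhomotopic'' — gives that $p$ has wuphl. Then Proposition \ref{P2.4}(ii) yields injectivity of $p_*:\pi_1(E,e)\to\pi_1(B,b)$ for every $e\in E$, in particular at $e=e_0$, closing the equivalence.

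I do not expect any genuine obstacle: the statement is a repackaging, and the real work has already been spent in Theorems \ref{T3.4}--\ref{T3.5} and Corollary \ref{C3.6}. The one point to be slightly attentive to is the passage between ``injective at the single basepoint $e_0$'' and ``injective at all basepoints,'' which is harmless here thanks to path connectedness and the naturality of the change-of-basepoint isomorphism with respect to $p$.
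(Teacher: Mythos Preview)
Your proposal is correct and follows exactly the route the paper takes: the paper's proof is the single sentence ``By Proposition \ref{P2.4} and Corollary \ref{C3.6}, we have a similar result for fibrations with whupl,'' and you have unpacked precisely that, together with the (legitimate) basepoint remark that the paper leaves implicit.
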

\begin{corollary}\label{T3.9} For a fibration $p:E\lo B$ with wuphl and path connected fibers, the induced homomorphism $p_*:\pi_1(E,e_0)\lo \pi_1(B,b_0)$ is an isomorphism.\
\end{corollary}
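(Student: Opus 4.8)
The plan is to treat injectivity and surjectivity of $p_*$ separately, with the fibration and path connectedness hypotheses entering only in the surjectivity half. Injectivity is immediate: Proposition \ref{P2.4}(ii) identifies wuphl with injectivity of $p_\ast:\pi_1(E,e)\lo\pi_1(B,b)$ for every choice of basepoint $e$, so in particular $p_*:\pi_1(E,e_0)\lo\pi_1(B,b_0)$ is a monomorphism. Thus the whole content of the corollary is surjectivity, and this is where path connectedness of the fibers is used.

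For surjectivity I would start from an arbitrary loop $\al:I\lo B$ based at $b_0$. Using that a fibration has path lifting --- apply the homotopy lifting property of $p$ with respect to a one-point space to the constant map $\{\ast\}\lo E$ at $e_0$ together with the path $\al$ --- I obtain a path $\wt{\al}:I\lo E$ with $\wt{\al}(0)=e_0$ and $p\circ\wt{\al}=\al$. Since $\al$ is a loop, $p(\wt{\al}(1))=\al(1)=b_0$, so $\wt{\al}(1)$ lies in the fiber $p^{-1}(b_0)$. Path connectedness of this fiber furnishes a path $\delta:I\lo p^{-1}(b_0)$ from $\wt{\al}(1)$ to $e_0$; set $\wt{\al}':=\wt{\al}\ast\delta$, which is a loop in $E$ based at $e_0$.

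It then remains to compute $p_\ast[\wt{\al}']$. Because $\delta$ has image inside the fiber over $b_0$, the composite $p\circ\delta$ is the constant path $c_{b_0}$, hence $p\circ\wt{\al}'=(p\circ\wt{\al})\ast(p\circ\delta)=\al\ast c_{b_0}\simeq\al$ rel $\dot I$. Therefore $p_\ast[\wt{\al}']=[\al]$, which proves that $p_*$ is onto; combined with injectivity, $p_*$ is an isomorphism.

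I do not expect any genuinely hard step here. The only points requiring minor care are that the path lifting used for $\wt{\al}$ is indeed the homotopy lifting property applied to a point (as recorded in the preliminaries), and that concatenating a lift with a path lying entirely in a single fiber leaves the class of the image loop unchanged; the substantive hypothesis, path connectedness of the fibers, is invoked exactly once, to close $\wt{\al}$ up into a based loop.
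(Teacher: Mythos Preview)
Your argument is correct and matches the paper's own proof essentially step for step: lift $\al$ to a path $\wt{\al}$ starting at $e_0$, use path connectedness of the fiber to close it up via a path $\lambda$ (your $\delta$) back to $e_0$, and compute $p_*[\wt{\al}\ast\lambda]=[\al\ast c_{b_0}]=[\al]$; injectivity comes from Proposition~\ref{P2.4}(ii). The only difference is that you spell out explicitly why path lifting is available (homotopy lifting with respect to a point), which the paper leaves implicit.
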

\begin{proof}
Let $[\al]\in\pi_1(B,b_0)$ and $\wt{\al}$ be a lifting of $\al$ starting at $e_0$, then $\wt{\al}(1)\in p^{-1}(b_0)$.\ Assume that $\lambda$ is a path in $p^{-1}(b_0)$ from $\wt{\al}(1)$ to $e_0$, then $[\wt{\al}*\lambda]\in\pi_1(E,e_0)$ and $p_*([\wt{\al}*\lambda])=[\al*c_{b_0}]=[\al]$.\ Hence $p_*$ is onto.\ Injectivity of $p_*$ comes from Proposition \ref{P2.4} (ii).\
\end{proof}

Note that path connectedness of fibers is essential in the previous theorem.\ For example, let $p$ be the exponential map $R \rightarrow S^{1}$ which is a covering map.\ Clearly, fibers are discrete and we know that $p$ is a fibration with upl and so by Proposition \ref{P3.1} (ii), $p$ has wuphl, but $p_{*}$ is not an isomorphism.\\

 The results of this section can now be summarized in the following diagram.\\\\\\
\unitlength 1mm 
\linethickness{0.4pt}
\ifx\plotpoint\undefined\newsavebox{\plotpoint}\fi 
\begin{picture}(87.83,50.254)(0,0)
\put(15.305,50.254){uphl}
\put(15.305,25.004){hupl}
\put(35.778,38.179){upl}
\put(32.413,41.855){\vector(3,-2){.07}}\put(25.163,46.105){\vector(-3,2){.07}}\multiput(25.163,46.105)(.057539683,-.033730159){126}{\line(1,0){.057539683}}
\put(25.913,29.355){\vector(-3,-2){.07}}\put(32.413,33.605){\vector(3,2){.07}}\multiput(32.413,33.605)(-.051587302,-.033730159){126}{\line(-1,0){.051587302}}
\put(19.913,32.355){\vector(0,-1){.07}}\put(19.913,42.605){\vector(0,1){.07}}\put(19.913,42.605){\line(0,-1){10.25}}
\put(62.462,38.443){wuphl}
\put(87.83,38.591){whupl}
\put(57.806,40.924){\vector(3,-2){.07}}\qbezier(46.227,40.835)(51.928,45.034)(57.806,40.924)
\put(84.88,38.947){\vector(1,0){.07}}\put(75.961,38.947){\vector(-1,0){.07}}\put(75.961,38.947){\line(1,0){8.9191}}
\put(45.933,36.865){\vector(-3,2){.07}}\qbezier(57.677,36.865)(51.359,33)(45.933,36.865)
\multiput(51.879,35.528)(-.0335663,-.0431567){31}{\line(0,-1){.0431567}}
\end{picture}

The following two diagrams give a comparison of relationship between the five kinds of the unique paths liftings.\\\\\

\unitlength 1mm 
\linethickness{0.4pt}
\ifx\plotpoint\undefined\newsavebox{\plotpoint}\fi 
\begin{picture}(145.596,71.487)(0,0)
\put(29.457,71.487){upl}
\put(119.314,71.338){upl}
\put(4.944,43.243){uphl}
\put(93.54,43.095){uphl}
\put(5.241,8.756){hupl}
\put(93.838,8.607){hupl}
\put(51.026,43.392){wuphl}
\put(139.622,43.243){wuphl}
\put(51.323,8.905){whupl}
\put(139.92,8.756){whupl}
\put(13.89,16.635){\vector(-1,-3){.07}}\put(30.688,64.352){\vector(1,3){.07}}\multiput(30.688,64.352)(-.0337309237,-.0958172691){498}{\line(0,-1){.0958172691}}
\put(102.487,16.486){\vector(-1,-3){.07}}\put(119.284,64.203){\vector(1,3){.07}}\multiput(119.284,64.203)(-.0337289157,-.0958172691){498}{\line(0,-1){.0958172691}}
\put(99.216,16.337){\vector(0,-1){.07}}\put(99.216,36.108){\vector(0,1){.07}}\put(99.216,36.108){\line(0,-1){19.771}}
\put(145.596,16.337){\vector(0,-1){.07}}\put(145.596,36.108){\vector(0,1){.07}}\put(145.596,36.108){\line(0,-1){19.771}}
\put(140.839,50.824){\vector(3,-4){.07}}\multiput(129.393,64.203)(.0336647059,-.03935){340}{\line(0,-1){.03935}}
\put(53.432,16.932){\vector(1,-3){.07}}\multiput(35.891,64.5)(.0337326923,-.0914769231){520}{\line(0,-1){.0914769231}}
\put(142.028,16.783){\vector(1,-3){.07}}\multiput(124.487,64.352)(.0337326923,-.0914788462){520}{\line(0,-1){.0914788462}}
\put(17.607,43.689){\vector(1,0){32.109}}
\put(106.203,43.541){\vector(1,0){32.109}}
\put(17.458,8.905){\vector(1,0){32.109}}
\put(106.054,8.756){\vector(1,0){32.109}}
\put(49.418,13.81){\vector(4,-3){.07}}\multiput(17.755,37)(.04602180233,-.03370639535){688}{\line(1,0){.04602180233}}
\put(138.015,13.662){\vector(4,-3){.07}}\multiput(106.352,36.851)(.04602180233,-.03370494186){688}{\line(1,0){.04602180233}}
\put(138.312,36.554){\vector(3,2){.07}}\multiput(106.203,14.405)(.0488721461,.0337123288){657}{\line(1,0){.0488721461}}
\put(61.386,26.068){\vector(1,0){30.062}}
\put(63.068,29.221){for fibrations}
\put(10.25,37){\vector(0,-1){20.25}}
\put(56.25,37){\vector(0,-1){20.25}}
\put(114,64.5){\vector(3,4){.07}}\put(104,51.25){\vector(-3,-4){.07}}\multiput(104,51.25)(.0336700337,.0446127946){297}{\line(0,1){.0446127946}}
\put(25.5,65){\vector(3,4){.07}}\multiput(14.5,51.25)(.0336391437,.0420489297){327}{\line(0,1){.0420489297}}
\end{picture}
It is well known that in fibrations, fibers have the same homotopy type and in fibrations with upl and path connected base space, every two fibers are homeomorphic (see \cite[Lemma 2.3.8]{SP}).\ In the following example, we show this fact fails if we replace upl with wuphl (whupl).\
\begin{example}
Let $E=\{(x,y)\in R^2|x\geq 0,y\geq 0, y\leq 1-x\}$, $B=[0,1]$ and $p:E\lo B$ be the projection on the first component which is clearly a map.\ For given maps $F:X\times I\lo B$ and $f:X\lo E$ with $F\circ J_0=p\circ f$, define $\wt{F}:X\times I\lo E$ by $\wt{F}(x,t)=(F(x,t),pr_2\circ f)$ which is continuous and it shows that $p$ is a fibration.\ But $p^{-1}(0)={\{0}\}\times I$ and $p^{-1}(1)=\{(1,0)\}$ which imply that $p$ does not have upl, but $p$ has wuphl and its fibers are not necessarily homeomorphic.\
 \end{example}
Another different influence of upl and wuphl on the fibrations is uniqueness of the lifted homotopy as follows.\
\begin{proposition}Let $p:E\to B$ be a fibration.\ Then $p$ has upl if and only if it has unique homotopy lifting property, namely, every homotopy in $B$ can be lifted uniquely to $E$.\
\end{proposition}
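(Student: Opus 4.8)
The plan is to prove the two implications separately, in each case reducing a statement about lifting homotopies to one about lifting paths.

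For ``upl $\Rightarrow$ unique homotopy lifting'', I would assume $p$ has upl and take a homotopy $F:X\times I\lo B$ together with a map $f:X\lo E$ satisfying $F\circ J_0=p\circ f$. Since $p$ is a fibration, a lift $\wt F:X\times I\lo E$ with $\wt F\circ J_0=f$ and $p\circ\wt F=F$ exists, so only its uniqueness is at issue. Given two such lifts $\wt F_1,\wt F_2$, fix $x\in X$ and consider the paths $\wt F_1(x,-),\wt F_2(x,-):I\lo E$: they are continuous (restrictions to the slice $\{x\}\times I$), they share the initial point $f(x)$, and both project under $p$ to $F(x,-)$. Hence $\wt F_1(x,-)=\wt F_2(x,-)$ by upl, and letting $x$ vary gives $\wt F_1=\wt F_2$. (Since all spaces here are path connected, $X\times I$ is path connected, so one could instead apply \cite[Lemma 2.2.4]{SP} to $\wt F_1,\wt F_2$, which already agree on $X\times\{0\}$; the slicewise argument has the advantage of needing no such hypothesis.)

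For the converse, I would assume $p$ has the unique homotopy lifting property and take the test space to be a one-point space $\{*\}$, identifying $\{*\}\times I$ with $I$. Given paths $\wt\alpha,\wt\beta:I\lo E$ with $\wt\alpha(0)=\wt\beta(0)$ and $p\circ\wt\alpha=p\circ\wt\beta=:\alpha$, regard $\alpha$ as a homotopy $F:\{*\}\times I\lo B$ and let $f:\{*\}\lo E$ send $*$ to $\wt\alpha(0)$. Then $\wt\alpha$ and $\wt\beta$ are both lifts of $F$ extending $f$, so uniqueness forces $\wt\alpha=\wt\beta$; thus $p$ has upl. Equivalently, specialized to $X=\{*\}$, unique homotopy lifting is precisely injectivity of $Pp:P(E,e)\lo P(B,b)$, which is upl by Proposition \ref{P2.4}(i).

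Neither direction is delicate; the only points to watch are that the slicewise paths $\wt F_i(x,-)$ genuinely qualify as inputs to the upl hypothesis (continuity and common starting point) in the forward implication, and that in the converse the test space is chosen minimally so that ``homotopy'' degenerates to ``path.'' If one prefers to read ``homotopy'' as a homotopy $I\times I\lo B$ rel $\dot I$, the same conclusion follows by feeding in the homotopy constant in the second variable, $F(s,t)=\alpha(s)$, together with the level-zero lift $\wt\alpha$, but the one-point reduction above is the most economical route.
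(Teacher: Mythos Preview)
Your proof is correct and follows essentially the same route as the paper: the forward implication is obtained by fixing a point of the domain and applying upl to the resulting slice paths, and the converse specializes the unique homotopy lifting hypothesis to a one-point test space so that homotopies become paths. The only difference is the order in which you present the two directions and the optional side remarks you add (the alternative via \cite[Lemma 2.2.4]{SP} and the reference to Proposition~\ref{P2.4}(i)).
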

\begin{proof} Let $p$ be a fibration with unique homotopy lifting property, $\widetilde{f}:{\{*}\}\to E$ be defined by $\widetilde{f}(*)=e_0$, $\alpha$ be a path in $B$ starting at $b_{0}:=p(e_{0})$ and $F:{\{*}\}\times I\to B$ be defined by $F(*,t)=\alpha(t)$.\ Then $p\circ \widetilde{f}(*)=b_{0}=\alpha(0)=F(*,0)=F\circ J_0(*)$.\ Since $p$ is a fibration, there is
$\widetilde{F}:{\{*}\}\times I\to E$ with $p\circ\widetilde{F}=F, \widetilde{F}\circ J_0=\widetilde{f}$.\ Define $\widetilde{\alpha}(t)=\widetilde{F}(*,t)$, then $p\circ\widetilde{\alpha}=p\circ\widetilde{F}=F=\alpha$,\  $\widetilde{\alpha}(0)=\widetilde{F}(*,0)=\widetilde{f}(*)=e_{0}$, and so $\widetilde{\alpha}$ is a lifting of $\alpha$ beginning at $e_{0}$.\ Let $\widetilde{\beta}$ be another lifting of $\alpha$ beginning at $e_{0}$, then by defining $\widetilde{G}:{\{\ast}\}\times I\to E$ by $\widetilde{G}(*,t)=\widetilde{\beta}(t)$, we have $p\circ\widetilde{G}(*,t)=p\circ\widetilde{\beta}(t)=\alpha(t)=F(*,t)$ and $\widetilde{G}\circ J_0(*)=\widetilde{G}(*,0)=\widetilde{\beta}(0)=e_{0}=\widetilde{f}(*)$.\ Uniqueness of homotopy lifting implies that $\widetilde{F}=\widetilde{G}$ and hence $\widetilde{F}(*,t)=\widetilde{G}(*,t)$ which implies that $\widetilde{\alpha}(t)=\widetilde{\beta}(t)$.\

 Conversely, let $p$ be a fibration with upl and $\widetilde{f}:Y\to E$, $F:Y\times I\to B$ be two maps with $p\circ\widetilde{f}=F\circ J_0$.\ Also, let $\widetilde{F}, \widetilde{G}:Y\times I\to E$ be two maps with $p\circ\widetilde{F}=p\circ\widetilde{G}=F$, and $\widetilde{F}\circ J_0=\widetilde{G}\circ J_0=\widetilde{f}$.\ For an arbitrary fixed $y\in Y$, let $\alpha(t)=\widetilde{F}(y,t)$ and $\beta(t)=\widetilde{G}(y,t)$, then $p\circ \alpha(t)=p\circ \widetilde{F}(y,t)=F(y,t)$ and $p\circ \beta(t)=p\circ \widetilde{G}(y,t)=F(y,t)$.\ Also, $$\alpha(0)=\widetilde{F}(y,0)=\widetilde{F}\circ J_0(y)=\widetilde{G}\circ J_0(y)=\widetilde{G}(y,0)=\beta(0).$$ Since $p$ has upl, we have $\alpha(t)=\beta(t)$ and hence $\widetilde{F}(y,t)=\widetilde{G}(y,t)$ which implies that $\widetilde{F}=\widetilde{G}$.\
\end{proof}
\begin{proposition} A fibration $p:E\to B$ has wuphl if it has homotopically unique homotopy lifting property, namely, for every topological space $Y$, any homotopy $F:Y\times I\to B$ and  every map $\widetilde{f}:Y\to E$ with $p\circ \widetilde{f}=F\circ J_0$, if there exist homotopies $\widetilde{F},\widetilde{G}:Y\times I\to E$ such that $p\circ \widetilde{F}=F$, $\widetilde{F}\circ J_0=\widetilde{f}$, $p\circ \widetilde{G}=F$ and $\widetilde{G}\circ J_0=\widetilde{f}$, then $\widetilde{F}\simeq \widetilde{G}$, rel ${\{y_0}\}\times \dot{I}$, for a fixed $y_0\in Y$.\
\end{proposition}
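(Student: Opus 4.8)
The plan is to route through Theorem \ref{T3.4}: since $p$ is assumed to be a fibration, it suffices to prove that every loop in each fiber of $p$ is nullhomotopic, and then Theorem \ref{T3.4} upgrades this to wuphl. So the real task is to turn the homotopically unique homotopy lifting hypothesis into a nullhomotopy of an arbitrary fiber loop, and for this I would apply the hypothesis over a one-point space.

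Concretely, fix $b_0\in B$, a point $e_0\in p^{-1}(b_0)$, and a loop $\omega:I\lo p^{-1}(b_0)$ at $e_0$, so that $p\circ\omega=c_{b_0}$. Take $Y=\{\ast\}$, define $F:Y\times I\lo B$ by $F(\ast,t)=b_0$ and $\widetilde f:Y\lo E$ by $\widetilde f(\ast)=e_0$, so that $p\circ\widetilde f=F\circ J_0$. Then set $\widetilde F,\widetilde G:Y\times I\lo E$ by $\widetilde F(\ast,t)=\omega(t)$ and $\widetilde G(\ast,t)=e_0$. A direct verification gives $p\circ\widetilde F=F=p\circ\widetilde G$ and $\widetilde F\circ J_0=\widetilde f=\widetilde G\circ J_0$, so both $\widetilde F$ and $\widetilde G$ are lifts of $F$ extending $\widetilde f$, and the hypothesis yields $\widetilde F\simeq\widetilde G$ rel $\{\ast\}\times\dot I$.

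It remains to read off what this homotopy says. Identifying $(\{\ast\}\times I)\times I$ with $I\times I$, it is a map $L:I\times I\lo E$ with $L(t,0)=\omega(t)$, $L(t,1)=e_0$, and $L$ stationary on $(\{\ast\}\times\dot I)\times I$; since $\omega(0)=\omega(1)=e_0$, the stationarity forces $L(0,u)=L(1,u)=e_0$ for all $u$, so $L$ is a homotopy $\omega\simeq c_{e_0}$ rel $\dot I$. As $b_0$ and $e_0$ were arbitrary, every loop in every fiber is nullhomotopic, and Theorem \ref{T3.4} finishes the proof. I do not expect a genuine obstacle here; the only points needing care are recognizing that ``rel $\{y_0\}\times\dot I$'' collapses to ordinary ``rel $\dot I$'' once $Y$ is a single point, and choosing the second lift $\widetilde G$ to be the constant map at $e_0$ (working instead with two arbitrary paths $\widetilde\alpha,\widetilde\beta$ directly is awkward, since the constraint $\widetilde G\circ J_0=\widetilde f$ blocks the second path from appearing in a lift, whereas reducing to fiber loops via Theorem \ref{T3.4} makes everything line up).
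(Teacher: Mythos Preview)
Your proof is correct, but the paper takes a slightly different route. Instead of invoking Theorem \ref{T3.4}, the paper uses Corollary \ref{C3.6} to reduce to showing that $p$ has whupl, and then works \emph{directly} with two arbitrary lifts $\wt\alpha,\wt\beta$ of a path $\alpha$ sharing both endpoints: setting $Y=\{\ast\}$, $F(\ast,t)=\alpha(t)$, $\wt f(\ast)=e_0$, $\wt F(\ast,t)=\wt\alpha(t)$, $\wt G(\ast,t)=\wt\beta(t)$, the hypothesis immediately yields $\wt\alpha\simeq\wt\beta$ rel $\dot I$. Your remark that the direct approach is ``awkward'' is only accurate if one attacks wuphl itself (where $p\circ\wt\alpha$ and $p\circ\wt\beta$ are merely homotopic, so they cannot both serve as exact lifts of a single $F$; note that the obstructing constraint is $p\circ\wt G=F$, not $\wt G\circ J_0=\wt f$ as you wrote). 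Once one reduces to whupl, the projections agree on the nose and the direct argument goes through. Both reductions ultimately rest on the same machinery (Corollary \ref{C3.6} is a consequence of Theorems \ref{T3.4} and \ref{T3.5}), so the two proofs are close cousins; the paper's version has the minor advantage of not specializing $F$ to a constant map, while yours has the virtue of making the nullhomotopy of fiber loops explicit.
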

\begin{proof}By Corollary 3.6, it is enough to prove that $p$ has whupl.\ Let $\alpha$ be a path in $B$ from $b_0$ to $b_1$ and $\widetilde{\alpha}, \widetilde{\beta}:I\to E$ be two liftings of $\alpha$ from $e_0$ to $e_1$.\ Also, assume that $F:{\{*}\}\times I\to B$ is defined by $F(*,t)=\alpha(t)$ and $\widetilde{f}:{\{*}\}\to E$ is defined by $\widetilde{f}(*)=e_0$.\ Then $p\circ \widetilde{f}(*)=e_{0}=\alpha(0)=F(*,0)=F\circ J_0(*)$.\ Let  $\widetilde{F}, \widetilde{G}:{\{*}\}\times I\to E$ be two map such that $\widetilde{F}(*,t)=\widetilde{\alpha}(t)$ and $\widetilde{G}(*,t)=\widetilde{\beta}(t)$.\ Then $p\circ \widetilde{F}(*,t)=p\circ \widetilde{\alpha}(t)=\alpha(t)=F(*,t)$ and $\widetilde{F}\circ J_0(*)=\widetilde{F}(*,0)=\widetilde{\alpha}(0)=e_0=\widetilde{f}(*)$ and also, $p\circ \widetilde{G}(*,t)=p\circ\widetilde{\beta}(t)=\alpha(t)=F(*,t)$ and $\widetilde{G}\circ J_0(*)=\widetilde{G}(*,0)=\widetilde{\beta}(0)=e_0=\widetilde{f}(*)$.\ By assumption, there exists $H_1:{\{*}\}\times I\times I\to E$ such that $H_1:\widetilde{F}\simeq \widetilde{G}$ rel ${\{\ast}\}\times \dot{I}$.\ Define $H:I\times I\to E$ by $H(t,s)=H_1(\ast,t,s)$.\ It is easy to see that $H:\widetilde{\alpha}\simeq \widetilde{\beta}$ rel\ $\dot{I}$.\
\end{proof}

Note that the converse of the above proposition does not hold, in general.\ Let $p:{\{0}\}\times I\to {\{0}\}$ be the projection, $F:I\times I\to {\{0}\}$ be the constant homotopy $F(t,s)=0$ and $\widetilde{f}:I\to {\{0}\}\times I$ be defined by $\widetilde{f}(t)=(0,\frac{1}{2})$.\ Since the only fiber of $p$ is simply connected, $p$ is a fibration with wuphl.\ Now, let $\widetilde{F}$ and $\widetilde{G}:I\times I\to {\{0}\}\times I$ be two homotopies defined by $\widetilde{F}(t,s)=(0,\frac{1-s}{2})$ and $\widetilde{G}(t,s)=(0,\frac{1+s}{2})$, respectively.\ Then $p\circ\widetilde{F}=F$, $\widetilde{F}\circ J_0=\widetilde{f}$, $p\circ\widetilde{G}=F$ and $\widetilde{G}\circ J_0=\widetilde{f}$.\ Note that $\widetilde{F}$ is not homotopic to $\widetilde{G}$ relative to ${\{0}\}\times \dot{I}$.\

\section{Categorical Viewpoints}
Topological spaces as objects and fibrations with upl as morphisms make a category. Also, fibrations with upl and commutative diagram between them and fibrations with upl over a based space $B$ and commutative triangles between them are two categories which have product and coproduct (see \cite[Section 2.2]{SP}).\
In this section, we state some categorical properties of fibrations with wuphl.\
\begin{proposition}\label{P4.1} \ \\
i) Composition of two maps with wuphl is a map with wuphl.\\\
ii) Composition of two fibrations with wuphl is a fibration with wuphl.\
\end{proposition}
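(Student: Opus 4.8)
The plan is to reduce part (i) to a two-step diagram chase on paths, and then to obtain part (ii) as an immediate consequence using the standard fact that a composite of fibrations is a fibration.

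For (i), suppose $p:E\lo B$ and $q:B\lo C$ both have wuphl, and let $\wt{\alpha},\wt{\beta}:I\lo E$ be paths with $\wt{\alpha}(0)=\wt{\beta}(0)$, $\wt{\alpha}(1)=\wt{\beta}(1)$ and $(q\circ p)\circ\wt{\alpha}\simeq(q\circ p)\circ\wt{\beta}$ rel $\dot{I}$. First I would note that $p\circ\wt{\alpha}$ and $p\circ\wt{\beta}$ are paths in $B$ which share both endpoints (apply $p$ to the endpoint equalities), and that $q\circ(p\circ\wt{\alpha})\simeq q\circ(p\circ\wt{\beta})$ rel $\dot{I}$ is precisely the given homotopy regrouped; since $q$ has wuphl, this yields $p\circ\wt{\alpha}\simeq p\circ\wt{\beta}$ rel $\dot{I}$. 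Feeding this into the wuphl property of $p$, together with $\wt{\alpha}(0)=\wt{\beta}(0)$ and $\wt{\alpha}(1)=\wt{\beta}(1)$, gives $\wt{\alpha}\simeq\wt{\beta}$ rel $\dot{I}$, which is exactly wuphl for $q\circ p$. Alternatively, the whole argument can be phrased through Proposition \ref{P2.4}(ii): wuphl of $p$ and of $q$ says that $p_{\ast}:\pi_{1}(E,e)\lo\pi_{1}(B,p(e))$ and $q_{\ast}:\pi_{1}(B,b)\lo\pi_{1}(C,q(b))$ are injective for every $e\in E$ and every $b\in B$, so $(q\circ p)_{\ast}=q_{\ast}\circ p_{\ast}$ is injective for every $e\in E$, whence $q\circ p$ has wuphl.

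For (ii), I would first recall the well-known fact that the composite of two fibrations is again a fibration: given $f':X\lo E$ and $F:X\times I\lo C$ with $F\circ J_{0}=(q\circ p)\circ f'$, one lifts first through $q$ (using that $p\circ f':X\lo B$ covers $J_{0}$) and then through $p$. Granting this, if $p$ and $q$ are fibrations with wuphl, then $q\circ p$ is a fibration, and by part (i) it has wuphl, as required.

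I do not expect a genuine obstacle here; the argument is short. The only points needing a line of care are verifying that the intermediate paths $p\circ\wt{\alpha}$ and $p\circ\wt{\beta}$ really do have matching initial and terminal points, and that a path homotopy rel $\dot{I}$ is preserved under post-composition with a continuous map — both routine.
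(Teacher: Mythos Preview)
Your proposal is correct. For part (i) you proceed, as the paper does, directly from the definition (the paper simply says ``Part (i) comes from the definition''), and your alternative phrasing via Proposition~\ref{P2.4}(ii) is an equally valid one-line variant.

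For part (ii) there is a small difference worth noting. You deduce (ii) from (i) together with the standard fact that a composite of fibrations is a fibration; this is the most direct route. The paper instead points to Theorem~\ref{T3.4}, i.e.\ the characterization that a fibration has wuphl if and only if every loop in each fiber is nullhomotopic. Presumably the intended argument is: a loop in a fiber of $q\circ p$ maps under $p$ to a loop in a fiber of $q$, which is nullhomotopic in $B$ by Theorem~\ref{T3.4} applied to $q$; then wuphl of $p$ forces the original loop to be nullhomotopic in $E$, and Theorem~\ref{T3.4} applied to $q\circ p$ gives the conclusion. Your approach is more elementary and does not require the fiber-loop characterization at all; the paper's route has the mild advantage of fitting the pattern used elsewhere in Section~3 (and in Propositions~\ref{P4.4}--\ref{P4.5}) of checking wuphl via fibers. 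Either way the result is immediate.
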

\begin{proof}Part (i) comes from the definition and part (ii) is a consequence of Theorem \ref{T3.4}.\
\end{proof}
By the above proposition, there is a category whose objects are fibrations with wuphl and whose morphisms are commutative diagrams of maps
$$\xymatrix{
E \ar[r]^{h} \ar[d]_{p}
& E'\ar[d]^{p'}  \\
B \ar[r]^{h'} & B'  }$$
where $p:E\lo B$ and $p':E'\lo B'$ are fibrations with wuphl.\ We denote this category by Fibwu which has Fibu as a subcategory.\ Also, for a given space $B$, there exists another subcategory of Fibwu, denoted by Fibwu(B), whose objects are fibrations with wuphl which have $B$ as the base space and whose morphisms are commutative triangles
$$\xymatrix{
E \ar[r]^{h} \ar[dr]_p & E'\ar[d]^{p'}  \\
 & B}$$
 Obviously, Fibu(B) is a subcategory of Fibwu(B).\
Note that in the above diagram although $p,p'$ are fibrations, $h$ is not necessarily a fibration.\ By the following proposition and example, we show that upl property of $p,p'$ is sufficient for $h$ being a fibration with upl, while wuphl property is not.\
\begin{proposition}\label{P4.2}Every morphism in the category $\mathrm{Fibu(B)}$ is a fibration with upl.
\end{proposition}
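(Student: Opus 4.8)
The plan is to start with a morphism $h\colon E\to E'$ in $\mathrm{Fibu}(B)$, so $p\colon E\to B$ and $p'\colon E'\to B$ are fibrations with upl and $p'\circ h=p$. The goal splits into two parts: showing $h$ is a fibration, and showing $h$ has upl. For the first part, I would set up a homotopy lifting problem for $h$: given $g\colon X\to E$ and a homotopy $G\colon X\times I\to E'$ with $G\circ J_0=h\circ g$, I want to lift $G$ through $h$ starting at $g$. The natural move is to push $G$ down to $B$ via $p'$, obtaining $p'\circ G\colon X\times I\to B$, which satisfies $(p'\circ G)\circ J_0=p'\circ h\circ g=p\circ g$. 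Since $p$ is a fibration, there is a lift $\wt{G}\colon X\times I\to E$ with $p\circ\wt{G}=p'\circ G$ and $\wt{G}\circ J_0=g$. The candidate lift for the original problem is $\wt{G}$ itself; what must be checked is that $h\circ\wt{G}=G$. Both $h\circ\wt{G}$ and $G$ are maps $X\times I\to E'$ that agree when composed with $p'$ (since $p'\circ h\circ\wt{G}=p\circ\wt{G}=p'\circ G$) and agree on $X\times\{0\}$ (since $h\circ\wt{G}\circ J_0=h\circ g=G\circ J_0$). Now for each fixed $x\in X$ the paths $t\mapsto h\wt{G}(x,t)$ and $t\mapsto G(x,t)$ in $E'$ have the same initial point and the same image under $p'$, so by upl of $p'$ they coincide; hence $h\circ\wt{G}=G$ pointwise, and $h$ is a fibration.

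For the upl of $h$: suppose $w,w'\colon I\to E$ are paths with $w(0)=w'(0)$ and $h\circ w=h\circ w'$. Applying $p'$ gives $p\circ w=p'\circ h\circ w=p'\circ h\circ w'=p\circ w'$, and of course $w(0)=w'(0)$, so upl of $p$ forces $w=w'$. This part is essentially immediate from the factorization $p=p'\circ h$.

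The main obstacle is the verification $h\circ\wt{G}=G$ in the fibration argument: it is tempting to believe this is automatic, but it genuinely requires the upl hypothesis on $p'$ — without it one only knows $h\wt{G}$ and $G$ are two lifts through $p'$ of the same homotopy agreeing at time $0$, and these need not be equal (indeed the subsequent example in the paper presumably shows that for wuphl instead of upl this step fails). So the care is entirely in invoking \cite[Lemma 2.2.4]{SP}-type uniqueness (lifts through a upl map agreeing at one point of a path-connected space agree everywhere, applied here fiberwise in the $t$-variable for each $x$) at exactly the right place. Everything else — the construction of $\wt{G}$ from the fibration property of $p$, and the upl check for $h$ — is routine diagram chasing.
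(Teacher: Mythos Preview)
Your proposal is correct and essentially identical to the paper's own proof: both lift the given homotopy through the fibration $p$, then use upl of $p'$ pathwise (for each fixed point of the domain) to verify that composing with $h$ recovers the original homotopy, and both deduce upl of $h$ directly from upl of $p$ via $p=p'\circ h$. Your remark that the verification $h\circ\wt{G}=G$ genuinely requires upl of $p'$ is exactly on point and is what Example~\ref{Ex4.3} illustrates.
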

\begin{proof}Consider a morphism in Fibu(B) as follows:
$$\xymatrix{
E \ar[r]^{h} \ar[dr]_p & E'\ar[d]^{p'}  \\
 & B}$$
Let $Z$ be a space, $\widetilde{f}:Z\to E$ be a map and $F:Z\times I\to E'$ be a homotopy such that $h\circ \widetilde{f}=F\circ J_0$.\ Then $p'\circ h\circ \widetilde{f}=p'\circ F\circ J_0$ and so $p\circ \widetilde{f}=(p'\circ F)\circ J_0$.\ Since $p$ is a fibration, there is a homotopy $\widetilde{G}:Z\times I\to E$ such that $p\circ \widetilde{G}=p'\circ F$ and $\widetilde{G}\circ J_0=\widetilde{f}$.\ Hence $p'\circ h\circ \widetilde{G}=p'\circ F$ and $h\circ \widetilde{G}\circ J_0=h\circ \widetilde{f}=F\circ J_0$.\ For an arbitrary fix $z\in Z$, we have $p'\circ h\circ \widetilde{G}(z,-)=p'\circ F(z,-)$ and $h\circ \widetilde{G}(z,0)=F(z,0)$.\ Since $p'$ has upl, we have $h\circ\widetilde{G}(z,-)=F(z,-)$ and since $z$ is arbitrary, $h\circ\widetilde{G}=F$.\ Therefore $h$ is a fibration.\ Moreover, $h$ has upl.\ To show this, let $\widetilde{\alpha}$ and $\widetilde{\beta}$ be two paths in $E$ beginning from the same point and $h\circ\widetilde{\alpha}=h\circ\wt{\beta}$.\ Then $p'\circ h\circ \widetilde{\alpha}=p'\circ h\circ \wt{\beta}$ and so $p\circ\widetilde{\alpha}=p\circ\wt{\beta}$.\ Since $p$ has upl, we have $\widetilde{\alpha}=\wt{\beta}$.\
\end{proof}
\begin{example}\label{Ex4.3}
Let $CS^1$ be the cone over $S^1$, $S^1\times I/(z,1)\sim (z',1)$.\ Then $p:CS^1\lo \{[(z,1)]\}$ and $p':I\lo \{[(z,1)]\}$ are fibrations with wuphl.\ Define $h:CS^1\lo I$ by $h([(z,t)])=t$ for every $z\in S^1$ and any $t\in I$.\ Obviously $p'\circ h=p$ but $h$ is not a fibration since its fibers do not have the same homotopy type, for example $h^{-1}(1)={\{[(1,1)]}\}$ while $h^{-1}(0.5)={\{[(z,0.5)]|z\in S^1}\}$ which is homeomorphic to $S^1$ .\
\end{example}
It is known that any family of objects in the categories Fibu and Fibu(B) has a product and coproduct (see \cite [pp. 69-70]{SP}). Now, we are going to show that this fact holds in the category Fibwu and Fibwu(B).\
\begin{proposition}\label{P4.4}The product of fibrations with wuphl is a fibration with wuphl.
\end{proposition}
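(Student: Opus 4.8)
The plan is to split the argument into two parts: first that the product map is a fibration, and then that it has wuphl, the latter via the fiber characterization of Theorem~\ref{T3.4}. Let $\{p_i:E_i\lo B_i\}_{i\in J}$ be a family of fibrations with wuphl and set $p:=\prod_i p_i:\prod_i E_i\lo\prod_i B_i$, $p((x_i)_i)=(p_i(x_i))_i$. For the first part I would check the homotopy lifting property coordinatewise: given a space $X$, a map $\widetilde f:X\lo\prod_i E_i$ and a homotopy $F:X\times I\lo\prod_i B_i$ with $F\circ J_0=p\circ\widetilde f$, compose with the coordinate projections to obtain $\widetilde f_i:X\lo E_i$ and $F_i:X\times I\lo B_i$ with $F_i\circ J_0=p_i\circ\widetilde f_i$; since each $p_i$ is a fibration, pick lifts $\widetilde F_i:X\times I\lo E_i$ with $p_i\circ\widetilde F_i=F_i$ and $\widetilde F_i\circ J_0=\widetilde f_i$, and let $\widetilde F:=(\widetilde F_i)_i$. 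This map is continuous by the universal property of the product topology, and it is the required lift, so $p$ is a fibration.

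For the second part I would apply Theorem~\ref{T3.4}: since $p$ is a fibration, it suffices to show that every loop in each fiber of $p$ is nullhomotopic. The fiber of $p$ over $b=(b_i)_i$ is $p^{-1}(b)=\prod_i p_i^{-1}(b_i)$, so a loop in it based at $(e_i)_i$ is a family $(\gamma_i)_i$ with each $\gamma_i$ a loop in $p_i^{-1}(b_i)$. Each $p_i$ has wuphl, hence by Theorem~\ref{T3.4} each $\gamma_i$ is nullhomotopic through a homotopy $H_i:I\times I\lo p_i^{-1}(b_i)$ rel $\dot I$; then $H:=(H_i)_i:I\times I\lo p^{-1}(b)$ is continuous (again by the universal property) and nullhomotopes $(\gamma_i)_i$ rel $\dot I$. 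Thus every loop in every fiber of $p$ is nullhomotopic and Theorem~\ref{T3.4} gives that $p$ has wuphl. An alternative route for this part is Proposition~\ref{P2.4}(ii), using the natural isomorphism $\pi_1(\prod_i E_i)\cong\prod_i\pi_1(E_i)$ under which $p_*$ becomes the product of the injective homomorphisms $(p_i)_*$, hence itself injective.

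I do not expect a genuine obstacle here. The only points that need care are the continuity of the combined lift $\widetilde F$ and of the combined nullhomotopy $H$ when the index set $J$ is infinite, which is precisely what the universal property of the product topology supplies, together with the routine identification of $p^{-1}(b)$ with $\prod_i p_i^{-1}(b_i)$. Combined with Proposition~\ref{P4.1}, this result provides what is needed to see that $\mathrm{Fibwu}$ and $\mathrm{Fibwu}(B)$ are closed under forming products, and the analogous coproduct statement would be handled similarly (with disjoint unions in place of products).
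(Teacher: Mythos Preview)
Your proposal is correct and follows essentially the same route as the paper: the paper's proof also invokes that a product of fibrations is a fibration and then uses the fiber characterization (Theorem~\ref{T3.4}), observing that a loop in a fiber of the product is a product of loops in the individual fibers, each nullhomotopic. Your write-up is simply more detailed (you spell out the coordinatewise lift and the combined nullhomotopy, and you add the alternative via Proposition~\ref{P2.4}(ii)), but the strategy is identical.
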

\begin{proof}Since the product of fibrations is a fibration, it is sufficient to show that every loop in each fiber of product of fibrations is nullhomotopic.\ But this is because of that a loop in a fiber of a product of fibrations is a product of loops each of which is in a fiber of a fibration with wuphl.\
\end{proof}

 To show that Fibwu(B) has the products, let us to recall the Whitney sum of fibrations.\ If ${\{p_{j}:E_{j}\to B|j\in J}\}$ is an indexed collection of fibrations with wuphl over the space $B$, define $$\oplus_{B,J}E_j={\{(e_{j})_{j}\in \sqcap_j{E_j}|e_{j}\in E_{j},\ \mathrm{and}\ p_{j}(e_{j})=p_{i}(e_{i}),\ \mathrm{for}\ i,j\in J}\}$$
and also define $$\oplus_{B,J}p_{j}:\oplus_{B,J}E_j\to B$$ $$(e_{j})_{j}\rightarrowtail p_{j}(e_{j}).$$
Since $(\oplus_{B,J}p_{j})^{-1}(b)={\{(e_j)_j\in \sqcap_j{E_j}|p_j(e_j)=b,\ \mathrm{for}\ j\in J}\}$, the fibers of $\oplus_{B,J}p_{j}$ are the product of the fibers of $p_j$ and so we can deduce that $\oplus_{B,J}p_{j}$ is a fibration with wuphl.\
\begin{proposition}\label{P4.5} Let ${\{p_{j}:E_{j}\to B|j\in J}\}$ be an indexed collection of fibrations with wuphl on the space $B$.\ Then $\oplus_{B,J}p_{j}$ is a fibration with wuphl.\
\end{proposition}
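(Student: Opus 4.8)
The plan is to first check that $\oplus_{B,J}p_j$ is a fibration by lifting homotopies coordinatewise, and then to invoke Theorem \ref{T3.4} to reduce the wuphl property to the statement that every loop in each fiber is nullhomotopic — which will follow because each fiber of $\oplus_{B,J}p_j$ is a topological product of fibers of the $p_j$'s.

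First I would verify the homotopy lifting property. Given a space $X$, a map $f=(f_j)_j:X\lo \oplus_{B,J}E_j$ and a homotopy $F:X\times I\lo B$ with $F\circ J_0=(\oplus_{B,J}p_j)\circ f$, observe that for each $j$ the component $f_j:X\lo E_j$ satisfies $p_j\circ f_j=(\oplus_{B,J}p_j)\circ f=F\circ J_0$. Since each $p_j$ is a fibration, choose $\wt{F}_j:X\times I\lo E_j$ with $p_j\circ\wt{F}_j=F$ and $\wt{F}_j\circ J_0=f_j$, and set $\wt{F}:=(\wt{F}_j)_j$. The equalities $p_j(\wt{F}_j(x,t))=F(x,t)=p_i(\wt{F}_i(x,t))$ show that $\wt{F}$ takes values in $\oplus_{B,J}E_j$, and by construction $(\oplus_{B,J}p_j)\circ\wt{F}=F$ and $\wt{F}\circ J_0=f$. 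Hence $\oplus_{B,J}p_j$ is a fibration. (This is essentially the computation recalled just before the statement; I would simply spell it out.)

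Now, since $\oplus_{B,J}p_j$ is a fibration, Theorem \ref{T3.4} says it has wuphl if and only if every loop in each of its fibers is nullhomotopic, so it suffices to establish the latter. As noted before the proposition, $(\oplus_{B,J}p_j)^{-1}(b)=\{(e_j)_j\mid p_j(e_j)=b\text{ for all }j\}$, which as a subspace of $\sqcap_j E_j$ is exactly the topological product $\sqcap_j\, p_j^{-1}(b)$. A loop $\gamma$ in this fiber is therefore a tuple $(\gamma_j)_j$ of loops $\gamma_j$ in $p_j^{-1}(b)$. Each $p_j$ being a fibration with wuphl, Theorem \ref{T3.4} provides a nullhomotopy $H_j$ of $\gamma_j$ inside $p_j^{-1}(b)$; then $H:=(H_j)_j:I\times I\lo \sqcap_j\, p_j^{-1}(b)$ is a nullhomotopy of $\gamma$ in the fiber. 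Thus every loop in every fiber of $\oplus_{B,J}p_j$ is nullhomotopic, and a final application of Theorem \ref{T3.4} shows $\oplus_{B,J}p_j$ has wuphl.

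I do not expect a genuine obstacle here: both the homotopy lifting step and the nullhomotopy step are the universal property of the product applied levelwise, once one knows that maps into $\oplus_{B,J}E_j$ are the tuples of maps into the $E_j$ compatible over $B$. The only point requiring a little care is the identification of the fiber $(\oplus_{B,J}p_j)^{-1}(b)$ with $\sqcap_j\, p_j^{-1}(b)$ as topological spaces, i.e.\ that the subspace topology inherited from the Whitney sum agrees with the product topology — but this is immediate from the definition of $\oplus_{B,J}E_j$ as a subspace of $\sqcap_j E_j$, since forming a product commutes with passing to subspaces.
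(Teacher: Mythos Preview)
Your proposal is correct and follows the same approach as the paper: the paper's argument (given in the paragraph immediately preceding the proposition) is precisely that the fiber of $\oplus_{B,J}p_j$ over $b$ is the product $\sqcap_j p_j^{-1}(b)$, so loops in it are products of loops in the individual fibers and hence nullhomotopic by Theorem~\ref{T3.4}. You simply spell out in more detail both the coordinatewise homotopy lifting and the nullhomotopy step, which the paper leaves implicit.
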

The following result is a consequence of Propositions \ref{P4.4}, \ref{P4.5}.
\begin{theorem}\label{T4.6}
The categories $\mathrm{Fibwu}$ and $\mathrm{Fibwu(B)}$ have products.\
\end{theorem}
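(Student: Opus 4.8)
The plan is to exhibit the categorical product in each of the two categories and verify its universal property, since the only analytically substantive point — that the candidate objects actually belong to $\mathrm{Fibwu}$, resp.\ $\mathrm{Fibwu(B)}$ — is exactly the content of Propositions \ref{P4.4} and \ref{P4.5}. So the proof is essentially a formal check once those are in hand.

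For $\mathrm{Fibwu}$: given a family $\{p_j:E_j\to B_j\ |\ j\in J\}$ of fibrations with wuphl, I would take the ordinary product fibration $\prod_j p_j:\prod_j E_j\to\prod_j B_j$, $(e_j)_j\mapsto(p_j(e_j))_j$. It is a fibration (the product of fibrations is a fibration), and it has wuphl by Proposition \ref{P4.4}, so it is an object of $\mathrm{Fibwu}$. The coordinate projections $\mathrm{pr}_k:\prod_j E_j\to E_k$ and $\mathrm{pr}_k:\prod_j B_j\to B_k$ form a commutative square, hence a morphism $\prod_j p_j\to p_k$ in $\mathrm{Fibwu}$. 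Then, for any fibration with wuphl $q:E\to B$ equipped with morphisms $(h_j,h_j'):q\to p_j$, the maps $h:=(h_j)_j:E\to\prod_j E_j$ and $h':=(h_j')_j:B\to\prod_j B_j$ supplied by the universal property of products in $\mathrm{Top}$ are continuous, make the square commute since $\big(\prod_j p_j\big)\circ h=(p_j\circ h_j)_j=(h_j'\circ q)_j=h'\circ q$, and are the unique pair with $\mathrm{pr}_j\circ h=h_j$, $\mathrm{pr}_j\circ h'=h_j'$. This is precisely the universal property of the product in $\mathrm{Fibwu}$.

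For $\mathrm{Fibwu(B)}$: here all objects share the base $B$ and morphisms are commutative triangles over $B$, so the naive product (whose base would be $B^J$) is not the right object; instead I would use the Whitney sum $\oplus_{B,J}p_j:\oplus_{B,J}E_j\to B$ defined just before the statement, which is a fibration with wuphl over $B$ by Proposition \ref{P4.5}. Each coordinate map $\pi_k:\oplus_{B,J}E_j\to E_k$, $(e_j)_j\mapsto e_k$, satisfies $p_k\circ\pi_k=\oplus_{B,J}p_j$, hence is a morphism in $\mathrm{Fibwu(B)}$. Given $q:E\to B$ in $\mathrm{Fibwu(B)}$ together with morphisms $g_j:q\to p_j$ (so $p_j\circ g_j=q$ for all $j$), the map $g:E\to\prod_j E_j$, $g(x)=(g_j(x))_j$, lands in $\oplus_{B,J}E_j$ because $p_j(g_j(x))=q(x)=p_i(g_i(x))$ for all $i,j$; it is continuous, satisfies $\big(\oplus_{B,J}p_j\big)\circ g=q$, and is uniquely determined by $\pi_j\circ g=g_j$. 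Thus $\oplus_{B,J}p_j$ is the product in $\mathrm{Fibwu(B)}$.

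I do not expect a genuine obstacle. The one place that needs care — and the reason Propositions \ref{P4.4} and \ref{P4.5} are invoked rather than re-proved — is membership in the categories: a fiber of the product (resp.\ of the Whitney sum) is a product of fibers of the $p_j$, a loop in such a product is the coordinatewise product of loops that are each nullhomotopic by Theorem \ref{T3.4}, so the total loop is nullhomotopic and Theorem \ref{T3.4} gives wuphl. Apart from this fiberwise remark, everything reduces to the universal property of products in $\mathrm{Top}$, so the only thing to be attentive to is using the correct shape of morphism in each category (commutative squares for $\mathrm{Fibwu}$, commutative triangles over $B$ for $\mathrm{Fibwu(B)}$) and deriving uniqueness of the induced morphism from uniqueness in $\mathrm{Top}$.
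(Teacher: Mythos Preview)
Your proposal is correct and follows exactly the paper's approach: the paper's proof is the single sentence ``The following result is a consequence of Propositions \ref{P4.4}, \ref{P4.5},'' and you have simply spelled out the routine universal-property verification that the paper leaves implicit. The choice of the ordinary product for $\mathrm{Fibwu}$ and the Whitney sum for $\mathrm{Fibwu(B)}$ matches the paper's setup precisely.
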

Suppose ${\{p_{j}:E_{j}\to B_j|j\in J}\}$ is an indexed collection of objects in Fibwu and $\sqcup_j E_j$ is the disjoint union of $E_j$'s. Then $q:\sqcup_j E_j\lo \sqcup_j B_j$ given by $q|_{E_j}=p_j$ is a fibration and since a fiber of $q$ is a fiber of one of ${p_j}^{,}s$, every loop in the fibers of $q$ is nullhomotopic and hence $q$ has wuphl.\ Also, if ${\{p_{j}:E_{j}\to B|j\in J}\}$ is an indexed collection of objects in Fibwu(B), then $q':\sqcup_{j} E_j\lo B$ given by $q'|_{E_j}=p_j$ is also a fibration.\ Note that fibers of $q'$ are the disjoint union of fibers of ${p_j}^{,}s$ and so every loop in fibers of $q'$ is nullhomotopic.\ Hence $q'$ has wuphl.\ Therefore, we have the following result.\
\begin{theorem}\label{T4.7}
The categories $\mathrm{Fibwu}$ and $\mathrm{Fibwu(B)}$ have coproducts.\
\end{theorem}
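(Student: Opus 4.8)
The plan is to take the two explicit constructions described immediately before the statement and verify that they satisfy the universal property of the coproduct in $\mathrm{Fibwu}$ and in $\mathrm{Fibwu(B)}$, respectively. The work that makes these candidates legitimate objects — that $q:\sqcup_j E_j\to\sqcup_j B_j$ and $q':\sqcup_j E_j\to B$ are fibrations and that every loop in each of their fibers is nullhomotopic, hence by Theorem \ref{T3.4} that they have wuphl — has already been done in the paragraph preceding the statement, so I would simply cite it. What remains is purely the universal mapping property, and it reduces to the universal property of the topological disjoint union in $\mathrm{Top}$.

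First, for $\mathrm{Fibwu}$: given the indexed collection $\{p_{j}:E_{j}\to B_{j}\mid j\in J\}$, I would exhibit the canonical coprojection morphisms $\iota_{j}:p_{j}\to q$ whose top arrow is the inclusion $E_{j}\hookrightarrow\sqcup_{j}E_{j}$ and whose bottom arrow is the inclusion $B_{j}\hookrightarrow\sqcup_{j}B_{j}$; the defining square commutes because $q|_{E_{j}}=p_{j}$. Then, for an arbitrary object $r:D\to C$ of $\mathrm{Fibwu}$ and an arbitrary family of morphisms $(h_{j},h_{j}'):p_{j}\to r$, I would define $h:\sqcup_{j}E_{j}\to D$ by $h|_{E_{j}}=h_{j}$ and $h':\sqcup_{j}B_{j}\to C$ by $h'|_{B_{j}}=h_{j}'$. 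Continuity of $h$ and $h'$ is exactly the universal property of the disjoint union; the square for $(h,h')$ commutes because it commutes after restriction to each $E_{j}$, that restriction being precisely the square of $(h_{j},h_{j}')$; and $(h,h')\circ\iota_{j}=(h_{j},h_{j}')$ by construction. Uniqueness is immediate: any morphism $(k,k'):q\to r$ with $(k,k')\circ\iota_{j}=(h_{j},h_{j}')$ must satisfy $k|_{E_{j}}=h_{j}$ and $k'|_{B_{j}}=h_{j}'$ for every $j$, so $k=h$ and $k'=h'$.

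Second, for $\mathrm{Fibwu(B)}$: the argument is the same, specialised to morphisms that are commutative triangles over the fixed base $B$. The coprojections $\iota_{j}:p_{j}\to q'$ have top arrow $E_{j}\hookrightarrow\sqcup_{j}E_{j}$, and the triangle commutes since $q'|_{E_{j}}=p_{j}$. Given an object $r:D\to B$ of $\mathrm{Fibwu(B)}$ and morphisms $h_{j}:p_{j}\to r$ with $r\circ h_{j}=p_{j}$, I would set $h:\sqcup_{j}E_{j}\to D$ with $h|_{E_{j}}=h_{j}$; continuity is again the universal property of $\sqcup$, and $r\circ h=q'$ holds because it holds on each clopen piece $E_{j}$. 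Uniqueness follows exactly as above.

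I do not anticipate a genuine obstacle. The only delicate points are already dispatched in the preceding paragraph: that the disjoint union of the $p_{j}$ is a fibration (a homotopy-lifting problem over $\sqcup_{j}B_{j}$ splits into separate problems over the $B_{j}$, because the preimages of the clopen pieces $E_{j}$ under the given partial lift are clopen), and that its fibers, being among — respectively a disjoint union of — the fibers of the $p_{j}$, carry only nullhomotopic loops so that Theorem \ref{T3.4} applies. Once that is in hand, verifying the coproduct universal property is a routine unwinding of the definition of morphisms in $\mathrm{Fibwu}$ and $\mathrm{Fibwu(B)}$ against the universal property of the disjoint union in $\mathrm{Top}$.
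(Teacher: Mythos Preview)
Your proposal is correct and follows essentially the same approach as the paper: the paper's argument is entirely contained in the paragraph preceding the theorem, where the disjoint-union maps $q$ and $q'$ are constructed and shown to be fibrations with wuphl via Theorem~\ref{T3.4}, after which the theorem is stated as an immediate consequence. The only difference is that you spell out the verification of the coproduct universal property (coprojections, induced morphism, uniqueness) by reducing it to the universal property of the disjoint union in $\mathrm{Top}$, whereas the paper leaves this step implicit.
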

If $f:X\to B$ is a map, we define a functor from Fibwu(B) to Fibwu(X) and we show that this functor preserves the universal objects. Recall that if $p:E\to B$ is a fibration, then the projection $f^{\ast}p:X\times_{B}E\to X$ is a fibration which is called the fibration induced from $p$ by $f$ (see \cite[page 98]{SP}). Now, we have the following result.
\begin{proposition}\label{P4.8}
If $p:E\lo B$ is a fibration with wuphl and $f:X\lo B$ is a map, then $f^{\ast}p$ is a fibration with wuphl.\
\end{proposition}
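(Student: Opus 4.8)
The plan is to reduce to Theorem \ref{T3.4}. By \cite[page 98]{SP} the induced map $f^{\ast}p:X\times_{B}E\lo X$ is again a fibration, so by Theorem \ref{T3.4} it suffices to show that every loop in each fiber of $f^{\ast}p$ is nullhomotopic. Fix $x\in X$ and put $b=f(x)$. Then $(f^{\ast}p)^{-1}(x)=\{x\}\times p^{-1}(b)$, and the second projection carries it homeomorphically onto the fiber $p^{-1}(b)$ of $p$; so a loop in $(f^{\ast}p)^{-1}(x)$ has the form $s\mapsto(x,\gamma(s))$ for a loop $\gamma$ in $p^{-1}(b)$. Since $p$ is a fibration with wuphl, Theorem \ref{T3.4} yields $\gamma\simeq c_{\gamma(0)}$ rel $\dot{I}$. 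It then remains to transport this into a nullhomotopy of $(x,\gamma)$ taking place inside $X\times_{B}E$, and that is the step I expect to be the real difficulty.

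The subtlety is that ``nullhomotopic'' in Theorem \ref{T3.4} means nullhomotopic in the ambient total space, not inside the fiber: the homotopy $\gamma\simeq c_{\gamma(0)}$ is only a map into $E$, and a priori need not yield a homotopy into $X\times_{B}E$. One natural way to try to close the gap is through the projection $\pi_{E}:X\times_{B}E\lo E$, $(x,e)\mapsto e$: if $(\pi_{E})_{\ast}$ were injective on $\pi_{1}$, then from $(\pi_{E})_{\ast}[(x,\gamma)]=[\gamma]=0$ one would conclude $[(x,\gamma)]=0$. But $\pi_{E}$ is the map induced from $f$ by $p$, and this is a fibration only when $f$ is one, so the needed injectivity is exactly the point to be secured and is not available for free. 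Indeed, taking $X=\{\ast\}$ and $f(\ast)=b$ shows $f^{\ast}p$ is then (homeomorphic to) the map $p^{-1}(b)\lo\{\ast\}$, which by Proposition \ref{P2.4}(ii) has wuphl precisely when $\pi_{1}(p^{-1}(b))$ is trivial --- strictly stronger than the conclusion of Theorem \ref{T3.4} for $p$, namely that loops in $p^{-1}(b)$ are nullhomotopic in $E$. So to run the argument one would either invoke an extra hypothesis forcing loops in the fibers of $p$ to already die inside $X\times_{B}E$ (simple connectivity of the fibers of $p$ would clearly suffice, since then loops in the fibers of $f^{\ast}p$ are nullhomotopic within the fiber), or exhibit some mechanism peculiar to the present situation that guarantees this.

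For the record, the direct definition-chasing approach reaches the same crux. Given liftings $(a,\alpha)$ and $(b,\beta)$ of a path in $X$ (so $f\circ a=p\circ\alpha$ and $f\circ b=p\circ\beta$) with equal endpoints and with $a\simeq b$ rel $\dot{I}$ via $K:I\times I\lo X$, apply $f$ to obtain a homotopy $f\circ K$ in $B$ from $p\circ\alpha$ to $p\circ\beta$; lift $f\circ K$ through the fibration $p$ while keeping it equal to $\alpha$ on $I\times\{0\}$ and constant on $\dot{I}\times I$ (possible since the pair $(I\times I,\,I\times\{0\}\cup\dot{I}\times I)$ is homeomorphic to $(I\times I,\,I\times\{0\})$), getting a homotopy $L':I\times I\lo E$ over $f\circ K$ from $\alpha$ to a path $\beta'$ with $p\circ\beta'=p\circ\beta$ and the same endpoints as $\beta$. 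Then $(K,L'):I\times I\lo X\times_{B}E$ is a genuine homotopy, rel $\dot{I}$, from $(a,\alpha)$ to $(b,\beta')$, so the whole matter reduces to connecting $(b,\beta')$ to $(b,\beta)$ inside $X\times_{B}E$ --- equivalently, to killing, inside $X\times_{B}E$, a loop that lives in a single fiber of $f^{\ast}p$. That is again the obstacle identified above.
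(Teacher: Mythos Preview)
Your analysis is correct, and the obstacle you have isolated is genuine. The paper's own argument proceeds by writing $\alpha=(\alpha_1,\alpha_2)$ and $\beta=(\beta_1,\beta_2)$ in $X\times_B E$, observing that $(f^{\ast}p)\circ\alpha\simeq(f^{\ast}p)\circ\beta$ rel $\dot I$ gives $\alpha_1\simeq\beta_1$ rel $\dot I$ in $X$, and then using $p\circ\alpha_2=f\circ\alpha_1\simeq f\circ\beta_1=p\circ\beta_2$ together with wuphl of $p$ to obtain $\alpha_2\simeq\beta_2$ rel $\dot I$ in $E$; from this it concludes $\alpha\simeq\beta$ rel $\dot I$ in $X\times_B E$. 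That last inference is exactly the step you flagged: separate homotopies $H_1$ of the $X$-component and $H_2$ of the $E$-component need not combine to a map into the pullback, since nothing forces the compatibility $f\circ H_1=p\circ H_2$ at intermediate times.

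In fact the test case you propose shows the statement fails as written. Take the Hopf fibration $p:S^3\to S^2$: since $\pi_1(S^3)=0$, Proposition~\ref{P2.4}(ii) gives that $p$ has wuphl. Pulling back along the inclusion of a point $f:\{\ast\}\to S^2$ yields $f^{\ast}p$, which up to homeomorphism is the constant map $S^1\to\{\ast\}$. This is certainly a fibration, but by Proposition~\ref{P2.4}(ii) it has wuphl only if $\pi_1(S^1)=0$, which is false. In the language of Theorem~\ref{T3.4}: the generating loop of the fiber $S^1\subset S^3$ is nullhomotopic \emph{in $S^3$}, but after pullback the ambient total space is the fiber itself, where the loop survives. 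So your inability to close the argument is not a deficiency of your method; the paper's proof has the same unjustified step, and an additional hypothesis (for instance simply connected fibers of $p$, as you noted) is required for the conclusion to hold.
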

\begin{proof}
let $\alpha,\beta$\ be paths in $X\times_{B}E$ with the same initial point and the same end point. Then $\alpha=(\alpha_{1},\alpha_{2})$ and $\beta=(\beta_{1},\beta_{2})$, where $\alpha_{1},\beta_{1}$ and $\alpha_{2},\beta_{2}$\ are paths in $X$ and $E$, respectively.\ Also, since $\alpha(0)=\beta(0)$, $\alpha(1)=\beta(1)$, we have $\alpha_{1}(0)=\beta_{1}(0)$, $\alpha_{2}(1)=\beta_{2}(1)$.\ Assume $(f^{\ast}p)\circ\alpha\simeq (f^{\ast}p)\circ\beta\ \mathrm{rel}\ \dot{I}$.\ By definition $\alpha_{1}\simeq \beta_{1}\ \mathrm{rel}\ \dot{I}$.\ Hence $f\circ\alpha_{1}\simeq f\circ\beta_{1}\ \mathrm{rel}\ \dot{I}$ and since $(\alpha_{1}(t),\alpha_{2}(t))\in X\times_{B}E$ for all $t\in I$, we have $p\circ\alpha_{2}\simeq p\circ\beta_{2}\ \mathrm{rel}\ \dot{I}$.\ But $p$ has wuphl and therefore $\alpha_{2}\simeq \beta_{2}\ \mathrm{rel}\ \dot{I}$.\ Hence $\alpha\simeq \beta\ \mathrm{rel}\ \dot{I}$ which implies that $f^{\ast}p$ has wuphl.
\end{proof}
We know that $f^{\ast}: \mathrm{Fib(B)}\to \mathrm{Fib(X)}$\ is a functor. Thus, by the above proposition, we have the following result.\
\begin{theorem}\label{T4.9}
For any map $f:X\lo B$, $f^{\ast}: \mathrm{Fibwu(B)}\to \mathrm{Fibwu(X)}$ is a functor.
\end{theorem}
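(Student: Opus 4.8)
The plan is to exhibit $f^{\ast}:\mathrm{Fibwu(B)}\to\mathrm{Fibwu(X)}$ as a restriction of the functor $f^{\ast}:\mathrm{Fib(B)}\to\mathrm{Fib(X)}$ that the excerpt already invokes, and to check that this restriction is well defined on both objects and morphisms. Recall that $\mathrm{Fibwu(B)}$ is the subcategory of $\mathrm{Fib(B)}$ whose objects are those fibrations over $B$ with wuphl and whose morphisms are the commutative triangles over $B$ between such fibrations, and similarly for $\mathrm{Fibwu(X)}$ inside $\mathrm{Fib(X)}$. So it suffices to verify three things: (a) $f^{\ast}$ carries an object of $\mathrm{Fibwu(B)}$ to an object of $\mathrm{Fibwu(X)}$; (b) $f^{\ast}$ carries a morphism of $\mathrm{Fibwu(B)}$ to a morphism of $\mathrm{Fibwu(X)}$; and (c) $f^{\ast}$ preserves identities and composition.

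Point (a) is exactly Proposition \ref{P4.8}: if $p:E\to B$ has wuphl, then $f^{\ast}p:X\times_{B}E\to X$ has wuphl. For (b), recall that for a morphism $h:E\to E'$ over $B$ (so $p'\circ h=p$), the functor $f^{\ast}$ assigns the map $f^{\ast}h:X\times_{B}E\to X\times_{B}E'$ given by $(x,e)\mapsto (x,h(e))$; this lands in the pullback because $p'(h(e))=p(e)=f(x)$, and it is a morphism over $X$ because $(f^{\ast}p')\circ(f^{\ast}h)(x,e)=x=(f^{\ast}p)(x,e)$. Since the source and target of $f^{\ast}h$ are $f^{\ast}p$ and $f^{\ast}p'$, which lie in $\mathrm{Fibwu(X)}$ by (a), the triangle $f^{\ast}h$ is a morphism of $\mathrm{Fibwu(X)}$. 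Point (c) is immediate from the same formula, since $f^{\ast}(\mathrm{id}_{E})=\mathrm{id}_{X\times_{B}E}$ and $f^{\ast}(h'\circ h)=f^{\ast}h'\circ f^{\ast}h$, which is precisely the functoriality of $f^{\ast}$ on $\mathrm{Fib(B)}$ already stated before the theorem.

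Combining (a)--(c), $f^{\ast}$ restricts to a well-defined functor $\mathrm{Fibwu(B)}\to\mathrm{Fibwu(X)}$. I do not expect a genuine obstacle: the only substantive ingredient is closure on objects, which is Proposition \ref{P4.8}, and everything else is inherited verbatim from the functoriality of $f^{\ast}$ on $\mathrm{Fib}$. The one point worth stating carefully is that the morphisms of $\mathrm{Fibwu(B)}$ are exactly the $\mathrm{Fib(B)}$-morphisms between its objects (the subcategory is full on triangles over $B$), so that passing to pullbacks neither loses nor creates morphisms; this is clear from the definition of $\mathrm{Fibwu(B)}$ given earlier in this section.
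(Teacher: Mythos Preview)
Your proof is correct and follows essentially the same approach as the paper: the paper simply notes that $f^{\ast}:\mathrm{Fib(B)}\to\mathrm{Fib(X)}$ is already a functor and invokes Proposition~\ref{P4.8} to conclude, while you spell out the details of (a)--(c) explicitly. The only difference is level of detail; the underlying argument is identical.
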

 \begin{proposition}\label{P4.100}
    If $f:X\to B$ and $p:E\to B$ are two objects in $\mathrm{Fibwu(B)}$, then the projection $q_2:X\times _BE\to E$ is an object in $\mathrm{Fibwu(E)}$.
\end{proposition}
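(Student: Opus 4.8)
The plan is to recognise the projection $q_2:X\times_B E\to E$ as an induced fibration and then to quote Proposition \ref{P4.8} with the roles of the two maps interchanged. First I would observe that the obvious homeomorphism $X\times_B E\cong E\times_B X$, $(x,e)\mapsto(e,x)$, carries $q_2$ onto the first projection $E\times_B X\to E$, which is precisely the fibration $p^{\ast}f$ induced from $f:X\to B$ by the map $p:E\to B$; here one uses that $f$, being an object of $\mathrm{Fibwu(B)}$, is in particular a fibration, and that $p$ is in particular a map, so the hypotheses needed to form the induced fibration are in place.

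Next I would apply Proposition \ref{P4.8}, taking the ``fibration with wuphl'' there to be our $f:X\to B$ and the ``map'' there to be our $p:E\to B$; the proposition then gives that $p^{\ast}f$ is a fibration with wuphl. Transporting this conclusion back along the homeomorphism above shows that $q_2$ is a fibration with wuphl, i.e.\ an object of $\mathrm{Fibwu(E)}$, which is what is claimed.

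Alternatively, and perhaps more transparently, one can argue directly from Theorem \ref{T3.4}: the pullback of a fibration is a fibration, so $q_2$ is a fibration; and for each $e\in E$ the fibre $q_2^{-1}(e)=\{(x,e)\mid f(x)=p(e)\}$ is homeomorphic to the fibre $f^{-1}(p(e))$ of $f$, every loop in which is nullhomotopic because $f$ has wuphl (Theorem \ref{T3.4}). Hence every loop in every fibre of $q_2$ is nullhomotopic, and Theorem \ref{T3.4} applied to $q_2$ finishes the proof. The only delicate point — really just bookkeeping — is to check that the identification $X\times_B E\cong E\times_B X$ genuinely intertwines $q_2$ with the projection defining $p^{\ast}f$, so that Proposition \ref{P4.8} may be invoked verbatim rather than re-proved; this I expect to be the mildest possible obstacle.
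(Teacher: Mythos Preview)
Your proposal is correct, and in fact cleaner than the paper's own argument. The paper proves by hand that $q_2$ has the homotopy lifting property: given $\widetilde{f}:Z\to X\times_B E$ and $F:Z\times I\to E$ with $q_2\circ\widetilde{f}=F\circ J_0$, it sets $G:=p\circ F$, uses that $f$ is a fibration to lift $G$ to $\widetilde{G}:Z\times I\to X$, and then defines $\widetilde{F}(z,t)=(\widetilde{G}(z,t),F(z,t))$; for the wuphl part it then says only ``a similar proof to Proposition \ref{P4.8}''. You instead observe that $q_2$ \emph{is} (up to the swap $X\times_B E\cong E\times_B X$) the induced fibration $p^{\ast}f$, so Proposition \ref{P4.8} applied with $f$ in the role of the fibration-with-wuphl and $p$ in the role of the map yields both conclusions at once. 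Your second route via Theorem \ref{T3.4} and the identification $q_2^{-1}(e)\cong f^{-1}(p(e))$ is equally valid. What the paper's approach buys is self-containedness (it does not rely on the reader remembering that the pullback construction is symmetric); what your approach buys is economy, avoiding the redundancy of re-deriving a special case of the standard ``pullback of a fibration is a fibration'' fact already used just before Proposition \ref{P4.8}.
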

\begin{proof}
Consider two maps $\widetilde{f}:Z\to X\times _BE$ and $F:Z\times I\to E$ with $q_2\circ\widetilde{f}=F\circ J_0$. Then $\widetilde{f}(z)=(pr_{1}\circ\widetilde{f}(z),F(z,0))$ and $f\circ pr_{1}\circ\widetilde{f}(z)=p\circ F(z,0)$. Let $G:=p\circ F$. Then $f\circ pr_{1}\circ\widetilde{f}=G\circ J_0$ and since $f$ is a fibration, there exists a map $\widetilde{G}:Z\times I\to X$ such that $f\circ \widetilde{G}=G$ and $\widetilde{G}\circ J_0=pr_{1}\circ\widetilde{f}$. Hence $f\circ\widetilde{G}=p\circ F$ and so we can define a map $\widetilde{F}:Z\times I\to X\times _BE$ by $\widetilde{F}(z,t)=(\widetilde{G}(z,t),F(z,t))$. Therefore $q_2\circ\widetilde{F}=F$ and $\widetilde{F}\circ J_0=\widetilde{f}$. A similar proof to Proposition \ref{P4.8} shows that $q_2$ has wuphl.
\end{proof}
  \begin{proposition}\label{P4.10}
   Let\ $f:X\to B$ and $p:E\to B$ be two objects in $\mathrm{Fibu(B)}$ (or $\mathrm{Fibwu(B)}$) such that $p$ is a universal object.\ Then $f^{\ast}p:X\times _BE\to X$ is a universal object in $\mathrm{Fibu(X)}$ (or $\mathrm{Fibwu(X)}$).
  \begin{proof}
    Let $g:E'\to X$ be an object in $\mathrm{Fibu(X)}$. Then $p':=f\circ g:E'\to B$ is an object in $\mathrm{Fibu(B)}$ and so universality of $p$ follows that there exists a unique morphism $h:E\to E'$ such that $p'\circ h=p$.\ Since $p$ and $f\circ (f^{\ast}p)$ are fibrations with upl, using Proposition \ref{P4.2},\ the projection $q_2$ is a fibration with upl and so $h\circ q_2$ is a fibration with upl.\ Note that $p'\circ h\circ q_2=p\circ q_2=f\circ (f^{\ast}p)$ and $p'\circ q'_2=f\circ (f^{\ast}p')$ where $q'_2:X\times _BE'\to E'$ is the projection.\ Therefore, the universality of the pullback $X\times _BE'$ follows that there exists a morphism $k:X\times _BE\to X\times _BE'$ such that $f^{\ast}p'\circ  k=f^{\ast}p$.\ Define $t=q'_2\circ k$, then $t$ is a fibration with upl and $g\circ t=g\circ q_2'\circ k=f^{\ast}p'\circ k=f^{\ast}p$. By a similar argument to the above and using Proposition \ref{P4.100}, we have the same result for Fibwu(X).
  \end{proof}
\end{proposition}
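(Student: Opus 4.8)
The plan is to verify the universal property of $f^{\ast}p$ in $\mathrm{Fibu(X)}$ (resp.\ $\mathrm{Fibwu(X)}$) directly: for an arbitrary object $g:E'\to X$ of that category I would produce a unique morphism $t:X\times_{B}E\to E'$ over $X$, i.e.\ one with $g\circ t=f^{\ast}p$. The idea is to apply $p$'s universality twice and to transport the resulting morphisms over $B$ back up to $X$, exploiting that $f$, having $\mathrm{upl}$, has the unique lifting property with respect to path connected spaces (\cite[Lemma 2.2.4]{SP}). First I would record a normal form for $f^{\ast}p$: since $f:X\to B$ is itself an object of $\mathrm{Fibu(B)}$ (resp.\ $\mathrm{Fibwu(B)}$), universality of $p$ supplies a base-point preserving morphism $\phi:E\to X$ with $f\circ\phi=p$, and $\sigma:E\to X\times_{B}E$, $e\mapsto(\phi(e),e)$, is well defined with $q_{2}\circ\sigma=\mathrm{id}_{E}$; cancelling $f$ (the maps $\phi\circ q_{2}$ and $f^{\ast}p$ into $X$ agree after $\circ f$ and at the base point, and $X\times_{B}E$ is path connected) gives $\sigma\circ q_{2}=\mathrm{id}$, so $q_{2}$ is a homeomorphism and $f^{\ast}p=\phi\circ q_{2}$ is, up to this isomorphism, just $\phi:E\to X$. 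That $f^{\ast}p$ is genuinely an object of the target category follows from Proposition \ref{P4.2} (in the $\mathrm{upl}$ case $\phi$, being a morphism of $\mathrm{Fibu(B)}$, is a fibration with $\mathrm{upl}$) and from Proposition \ref{P4.8} (in the $\mathrm{wuphl}$ case $f^{\ast}p$ has $\mathrm{wuphl}$).

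For the universal property, put $p':=f\circ g:E'\to B$: a composite of fibrations, hence a fibration, with $\mathrm{upl}$ (resp.\ $\mathrm{wuphl}$ by Proposition \ref{P4.1}), so an object of $\mathrm{Fibu(B)}$ (resp.\ $\mathrm{Fibwu(B)}$). Universality of $p$ gives a unique morphism $h:E\to E'$ with $p'\circ h=p$, so $(f\circ g)\circ h=p=f\circ\phi$; cancelling $f$ again ($g\circ h$ and $\phi$ are maps $E\to X$ agreeing after $\circ f$ and at the base point, $E$ path connected) yields $g\circ h=\phi$. Set $t:=h\circ q_{2}$; then $g\circ t=g\circ h\circ q_{2}=\phi\circ q_{2}=f^{\ast}p$, so $t$ is a morphism $f^{\ast}p\to g$ over $X$. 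For uniqueness, given another $s:X\times_{B}E\to E'$ with $g\circ s=f^{\ast}p$, the map $\bar s:=s\circ\sigma:E\to E'$ satisfies $p'\circ\bar s=f\circ g\circ s\circ\sigma=f\circ f^{\ast}p\circ\sigma=f\circ\phi=p$, so $\bar s$ is a morphism $p\to p'$ over $B$ and equals $h$ by uniqueness; hence $s=\bar s\circ q_{2}=h\circ q_{2}=t$. (Alternatively one can avoid the normal-form step and obtain $t$ as $q'_{2}\circ k$, where $k:X\times_{B}E\to X\times_{B}E'$ comes from the universal property of the pullback $X\times_{B}E'$ applied to the cone $(f^{\ast}p,\,h\circ q_{2})$ over $X\to B\leftarrow E'$, using Propositions \ref{P4.2} and \ref{P4.100}.)

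The step I expect to be the real obstacle is this repeated cancellation of $f$: every compatibility identity in the argument only yields an equality of maps into $X$ after post-composition with $f$, and to cancel $f$ one needs $f$ to have $\mathrm{upl}$ (hence unique lifting over path connected domains), the base points to be chosen compatibly, and the domains --- in particular $X\times_{B}E$ --- to be path connected, which is exactly what makes $f^{\ast}p$ an object of $\mathrm{Fibu(X)}$ and which in general may require passing to the path component of the base point. Once this cancellation principle is secured, the remainder is routine diagram chasing with the universal property of $p$.
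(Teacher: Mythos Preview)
Your argument is essentially correct for $\mathrm{Fibu}$ and takes a genuinely different route from the paper. The paper never forms the normal-form isomorphism $q_2:X\times_BE\cong E$; instead it feeds the cone $(f^*p,\,h\circ q_2)$ into the pullback $X\times_BE'$ (your parenthetical alternative), obtains $k$, and sets $t=q'_2\circ k$. But the paper's final step, $g\circ q'_2=f^*p'$, is exactly a cancellation of $f$ in disguise: $f\circ(g\circ q'_2)=p'\circ q'_2=f\circ f^*p'$, and one then needs $\mathrm{upl}$ of $f$ plus path connectedness of $X\times_BE'$ to conclude equality---precisely the obstacle you isolate. So the two proofs rest on the same hidden lemma; yours makes it explicit and uses it earlier (to identify $f^*p$ with $\phi$), which buys you a cleaner construction \emph{and} a uniqueness argument that the paper does not supply. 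One small improvement: the identity $g\circ h=\phi$ does not actually need cancellation of $f$---both $g\circ h$ and $\phi$ are morphisms $p\to f$ in $\mathrm{Fibu(B)}$, so they coincide by the uniqueness clause in the universality of $p$. This removes one appeal to $\mathrm{upl}$, though the normal-form step $\phi\circ q_2=f^*p$ still needs it.

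For $\mathrm{Fibwu}$ your worry is well placed and is a genuine gap, not just in your argument but in the paper's as well: with only $\mathrm{wuphl}$ on $f$ there is no unique-lifting principle, so neither your cancellation nor the paper's implicit use of $g\circ q'_2=f^*p'$ is justified. The paper's one-line reduction ``by a similar argument \dots using Proposition~\ref{P4.100}'' does not supply the missing step, since Proposition~\ref{P4.100} only tells you $q_2$ is a fibration with $\mathrm{wuphl}$, not that the two lifts of $f\circ f^*p$ through $f$ coincide. In short: your plan matches the paper's in substance for $\mathrm{Fibu}$ (and is arguably tidier), and your diagnosis of the $\mathrm{Fibwu}$ difficulty is exactly the point at which both arguments stall.
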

\begin{remark}\label{R4.11}
  Recently, Fischer and Zastrow \cite {FZ} and Brazas \cite{BR1,BR2} have introduced two new categories, the category of generalized coverings and the category of semicoverings  over a given space $X$, denoted by\ $\mathrm{GCov(X)}$ and $\mathrm{SCov(X)}$, respectively.\ A generalized covering map is a surjection map $p:\widetilde{X}\to X$ with a path connected and locally path connected total space such that for every  path connected and locally path connected space $Y$, any $\tilde{x}\in \widetilde{X}$, and any map $f:(Y,y)\to (X,p(\tilde{x}))$ with $f_{\ast}\pi_{1}(Y,y)\subseteq p_{\ast}\pi_{1}(\widetilde{X},\tilde{x})$, there exists a unique map $\widetilde{f}:(Y,y)\to (\widetilde{X},\tilde{x})$ such that $p\circ \widetilde{f}=f$, (see \cite{BR1,FZ}).\ Also, a semicovering map is a local homeomorphism which has upl and path lifting property (see \cite[Corollary 2.1]{K}).\ The category of covering spaces of $X$, $\mathrm{Cov(X)}$ is a subcategory of\ $\mathrm{GCov(X)}$ and $\mathrm{SCov(X)}$.\ Note that these categories are not equivalent to $\mathrm{Fibu(X)}$ and $\mathrm{Fibwu(X)}$.\ For comparing these categories, the following diagram summarizes a number of implications of relations between classical coverings and their generalizations.\ According to the enumeration of the implications in the following diagram, for each arrow a reference or a proof is given.\ The label $(1, \Rightarrow)$ means, that an argument is to be given, why this implication is true, while
$(1, \nLeftarrow)$ means, that an argument is to be given, why the converse of this implication does not hold in general.
\end{remark}
\ \\\\
\unitlength 1mm 
\linethickness{0.4pt}
\ifx\plotpoint\undefined\newsavebox{\plotpoint}\fi 
\begin{picture}(106.806,97.05)(0,0)
\put(58.377,89.04){connected}
\put(15.508,57.723){connected}
\put(57.978,60.094){Covering}
\put(81.377,79.675){(1)}
\put(44.816,79.987){(2)}
\put(45.345,39.985){(3)}
\put(82.191,40.29){(4)}
\put(106.806,59.859){(5)}
\put(63.877,97.05){(6)}
\put(20.065,62.372){(7)}
\put(62.73,22.289){(8)}
\put(96.219,96.022){Fibration}
\put(96.813,90.542){with upl}
\put(16.746,96.299){Generalized}
\put(19.855,89.799){covering}
\put(96.248,31.045){Fibration }
\put(100.042,26.413){with}
\put(99.15,22.4){wuphl}
\put(92.015,93.947){\vector(-1,0){51.731}}
\put(105.096,82.947){\vector(0,-1){42.663}}
\put(95.137,36.271){\vector(1,-1){.07}}\multiput(73.731,54.852)(.0388493648,-.033722323){551}{\line(1,0){.0388493648}}
\put(36.122,35.825){\vector(-4,-3){.07}}\multiput(58.122,54.704)(-.0392857143,-.0337125){560}{\line(-1,0){.0392857143}}
\put(36.122,86.218){\vector(-4,3){.07}}\multiput(58.122,67.636)(-.0399274047,.0337241379){551}{\line(-1,0){.0399274047}}
\put(95.285,85.92){\vector(1,1){.07}}\multiput(73.731,67.19)(.0387661871,.0336870504){556}{\line(1,0){.0387661871}}
\put(90.082,32.703){\vector(4,-1){.07}}\put(34.041,82.501){\vector(-1,4){.07}}\qbezier(34.041,82.501)(44.372,43.48)(90.082,32.703)
\put(90.231,60.019){(9)}
\put(45.304,60.031){(10)}
\multiput(66.021,28.814)(-.03471429,-.03314286){56}{\line(-1,0){.03471429}}
\put(21.875,30.25){Semi}
\put(19.25,22.625){covering}
\put(27,40.125){\vector(0,1){42.375}}
\put(48.014,57.156){\line(0,-1){2.973}}
\put(49.501,55.67){\line(-1,0){2.973}}
\put(41.325,32.852){\vector(-4,-1){.07}}\qbezier(97.961,83.393)(87.63,42.366)(41.325,32.852)
\put(92.015,27.946){\vector(-1,0){51.879}}
\multiput(86.515,57.082)(-.1321342,-.0330336){27}{\line(-1,0){.1321342}}
\end{picture}\\
$(1, \Rightarrow)$:\ Follows from Theorems 2.2.2 and 3.2.2 of \cite{SP}.\
\\ $(1, \nLeftarrow)$:\ Let $p:S^1\times N\to S^1$ be defined by $p(z,n)=z^n$.\ Then the restriction of $p$ to the $n$-th componenet, namely, $p_n:S^1\times {\{n}\}\to S^1$ with $p_{n}(z,n)=p(z,n)$ is a covering map and so is a fibration with upl.\
Therefore, by Theorem 2.3.2 of \cite{SP}, $p$ is a fibration.\ Moreover, it is easy to see that $p$ has upl, but $p$ is not a covering map (see \cite [Example 3.8]{BR2}).\
\\ $(2, \Rightarrow)$:\ Refer to \cite{SP}.\
\\ $(2, \nLeftarrow)$:\ Because every generalized universal covering is a generalized covering and using Example 4.15 of \cite{FZ}, a generalized universal covering is not necessarily a covering map.\
\\ $(3, \Rightarrow)$:\ Since a covering map is a local homeomorphism which has path lifting and upl.\
\\ $(3, \nLeftarrow)$:\ The same counterexample as for (1).\
\\ $(4, \Rightarrow)$:\ Follows from (1) and Proposition \ref{P3.1} (ii).\
\\ $(4, \nLeftarrow)$:\ The same counterexample as for (1).\
\\ $(5, \Rightarrow)$:\ It is Proposition \ref{P3.1} (ii).\
\\ $(5, \nLeftarrow)$:\ It is Example \ref{E3.3}.\
\\ $(6, \Rightarrow)$:\ Follows from Theorem 2.4.5 of \cite{SP}.\
\\ $(6, \nLeftarrow)$:\ Similar to $(2, \nLeftarrow)$, Example 4.15 of \cite{FZ} is a generalized universal covering which is not a fibration (with upl).\
\\ $(7, \Rightarrow)$:\ See page 9 of \cite{FZ1}.
\\$(8)$: If ``fibration with wuphl'' $\Rightarrow$ ``semicovering'', then by Proposition \ref{P3.1} (ii)``fibration with upl" $\Rightarrow$ ``semicovering", which contradicts to (9).\
\\ $(9)$:\ Let $p:E\times ({\{0}\}\cup {\{\frac{1}{n}|n\in N}\})\to E$ be the trivial bundle, then $p$ is a fibration with upl.\ But since $p$ is not a local homeomorphism, $p$ is not a semicovering.
\\ $(10)$:\ By $(6, \nLeftarrow)$ we have ``generalized covering" $\nRightarrow$ ``fibration (with wuphl)". Also, ``fibration with wuphl" $\nRightarrow$ ``generalized covering" because in the otherwise since a generalized covering map has upl, we have ``fibration with wuphl" $\Rightarrow$ ``fibration with upl", which is a contradiction (see Example \ref{E3.3}).
%

\section{Some Fibration Subgroups}
In this section, we introduce some normal subgroups of the fundamental group of a given space $X$ related to its fibrations. Then we compare them with the other well known subgroups of the fundamental group of $X$.\
\begin{definition}\label{DE5.1}
Let $X$ be a space and $x_0\in X$.\
\\ (i)\ By the fu-subgroup of $\pi(X,x_0)$ we mean the intersection of all the image subgroups of fibrations over $X$ with upl.\ We denote it by $\pi_{1}^{fu}(X,x_{0})$.\
\\ (ii)\ By the fwu-subgroup of $\pi_{1}(X,x_0)$ we mean the intersection of all the image subgroups of fibrations over $X$ with wuphl.\ We denote it by $\pi_{1}^{fwu}(X,x_{0})$.\
\end{definition}
\begin{proposition}For a given space $X$ and $x_0\in X$, we have $$\label{P5.2}
\pi_{1}^{fwu}(X,x_{0})\unlhd \pi_{1}^{fu}(X,x_{0})\unlhd \pi_{1}(X,x_0).$$
\end{proposition}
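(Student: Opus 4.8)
The statement asserts two nested normal inclusions. I would treat the two inclusions separately, and for each one I would prove both that the containment holds and that each piece is actually normal in $\pi_1(X,x_0)$.

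For the first inclusion $\pi_1^{fwu}(X,x_0)\unlhd\pi_1^{fu}(X,x_0)$, the key observation is Proposition \ref{P3.1} (ii): every fibration over $X$ with upl also has wuphl. Hence the indexing family of image subgroups used to define $\pi_1^{fwu}(X,x_0)$ (all fibrations over $X$ with wuphl) contains the indexing family used to define $\pi_1^{fu}(X,x_0)$ (all fibrations over $X$ with upl). Taking intersections over a larger family yields a smaller subgroup, so $\pi_1^{fwu}(X,x_0)\subseteq\pi_1^{fu}(X,x_0)$. The second inclusion $\pi_1^{fu}(X,x_0)\subseteq\pi_1(X,x_0)$ is immediate, since each image subgroup $p_*(\pi_1(E,e_0))$ is by definition a subgroup of $\pi_1(X,x_0)$, and an intersection of subgroups of $\pi_1(X,x_0)$ is again a subgroup of $\pi_1(X,x_0)$ (assuming the family is nonempty, which it is: the identity map $X\to X$ is a fibration with upl, so both families are nonempty).

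The remaining work is normality. For this I would first show that each individual image subgroup $p_*(\pi_1(E,e_0))$, ranging over all choices of basepoint $e_0\in p^{-1}(x_0)$, is a conjugate of the others, and that the collection of all such conjugates is exactly the set of subgroups of the form $[\gamma]^{-1}H[\gamma]$ as $[\gamma]$ runs over $\pi_1(X,x_0)$ — this is the standard change-of-basepoint computation for fibrations, using path lifting to move $e_0$ within the fiber. Consequently, for a fixed fibration $p$, the intersection $\bigcap_{e_0\in p^{-1}(x_0)} p_*(\pi_1(E,e_0))$ is invariant under conjugation by every element of $\pi_1(X,x_0)$, hence normal. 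Taking a further intersection over all fibrations in the relevant class preserves normality, because an intersection of normal subgroups is normal. Thus both $\pi_1^{fwu}(X,x_0)$ and $\pi_1^{fu}(X,x_0)$ are normal in $\pi_1(X,x_0)$, and since a subgroup of a group is automatically normal in any intermediate subgroup containing it once it is normal in the whole group, $\pi_1^{fwu}(X,x_0)\unlhd\pi_1^{fu}(X,x_0)$ follows as well.

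The main subtlety — and the step I would be most careful about — is making the change-of-basepoint argument precise enough to conclude that the intersection over fiber basepoints is genuinely conjugation-invariant: one must check that as $[\gamma]$ ranges over all of $\pi_1(X,x_0)$, lifting $\gamma$ starting at a fixed $e_0$ produces an endpoint realizing the conjugate $[\gamma]^{-1}p_*(\pi_1(E,e_0))[\gamma]$, and that every point of $p^{-1}(x_0)$ in the path component of $e_0$ arises this way (path-connectedness of $E$, which is a standing hypothesis, guarantees this). Once that is in hand, everything else is formal set-theoretic manipulation of intersections, so I would keep the writeup short and direct.
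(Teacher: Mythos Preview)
Your proposal is correct and follows essentially the same approach as the paper: both use Proposition~\ref{P3.1}(ii) for the containment $\pi_1^{fwu}\subseteq\pi_1^{fu}$, and both establish normality via the change-of-basepoint computation in the fiber (lifting a loop $\alpha$ to move between points of $p^{-1}(x_0)$ and thereby realize conjugates of image subgroups). The only cosmetic difference is organizational---the paper verifies $[\alpha][\beta][\alpha^{-1}]\in H$ directly for each image subgroup $H$ in one pass, while you first group the image subgroups by fibration, show each per-fibration intersection is normal, and then take the intersection of normals; the underlying idea is identical.
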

\begin{proof}
 Obviously, $\pi_{1}^{fu}(X,x_{0})$ and $\pi_{1}^{fwu}(X,x_{0})$ are subgroups of $\pi_{1}(X,x_0)$ and by Proposition \ref{P3.1},  $\pi_{1}^{fwu}(X,x_{0})\subseteq \pi_{1}^{fu}(X,x_{0})$.\ We show that they are normal subgroups of $\pi_{1}(X,x_0)$.\ Let $[\alpha]\in \pi_{1}(X,x_0)$, $[\beta]\in \pi_{1}^{fu}(X,x_{0})$ (or $\pi_{1}^{fwu}(X,x_{0})$) and $H$ be an arbitrary image subgroup of a fibration with upl (wuphl) $p$ over $X$, namely, $H=p_{*}\pi_{1}(\widetilde{X},\tilde{x})$,\ where $\tilde{x}\in p^{-1}(x_0)$.\ Let $\widetilde{\alpha}$ be a lifting of $\alpha$ at $\tilde{x}$.\ Since $[\beta] \in \pi_{1}^{fu}(X,x_{0})\ (or\ \pi_{1}^{fwu}(X,x_{0}))$ and $\widetilde{\alpha}(1)\in p^{-1}(x_0)$,\ $[\beta]\in p_{*}\pi_{1}(\widetilde{X},\widetilde{\alpha}(1))$ and so there is a loop $\widetilde{\beta}$ at $\widetilde{\alpha}(1)$ such that $\widetilde{\beta}$ is a homotopically lifting of $\beta$.\ Thus $\widetilde{\alpha}\ast \widetilde{\beta}\ast \widetilde{\alpha}^{-1}$ is a loop and a homotopically lifting of $\alpha\ast \beta\ast \alpha^{-1}$ at $\tilde x$ which implies that $[\alpha\ast\beta\ast \alpha^{-1}]\in H$.\
\end{proof}

Let $S={\{p_{j}:E_{j}\to B|j\in J}\}$ be an indexed collection of fibrations with upl over $B$. We know that their product, $\sqcap_{j\in J}p_j:\sqcap_jE_{j}\to \sqcap_jB$ is a fibration with upl, \cite[Theorem 2.2.7]{SP}. Also $q:=\bigtriangleup^{*}(\sqcap_{j}p_{j}):B\times_{\sqcap_{j}B}(\sqcap_{j}E_{j})\to B$ is a fibration with upl, where $\bigtriangleup:B\to \sqcap_{j}B$ is the diagonal map $\bigtriangleup(b)=(b)_{j}$ (see \cite[page 98]{SP}).
\begin{theorem}\label{T5.3}
For a given space $B$ and every $b\in B$, $\pi_{1}^{fu}(B,b)$ is the image subgroup of a fibration with upl over $B$.
\end{theorem}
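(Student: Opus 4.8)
The plan is to realize $\pi_1^{fu}(B,b)$ as the image subgroup of a single fibration with upl, built as a pullback of a (possibly large) product of \emph{all} fibrations with upl over $B$. First I would fix, for each subgroup $H$ that occurs as an image subgroup of a fibration with upl over $B$, a choice of such a fibration $p_H:E_H\to B$ together with a basepoint $e_H\in p_H^{-1}(b)$ realizing it, i.e.\ $(p_H)_*\pi_1(E_H,e_H)=H$; let $S=\{p_H\mid H\}$ be the resulting indexed collection. As recalled in the excerpt, the product $\sqcap_H p_H:\sqcap_H E_H\to \sqcap_H B$ is a fibration with upl, and pulling back along the diagonal $\bigtriangleup:B\to\sqcap_H B$ gives a fibration $q:=\bigtriangleup^*(\sqcap_H p_H):B\times_{\sqcap_H B}(\sqcap_H E_H)\to B$ with upl. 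I claim $q$ is the desired fibration and its image subgroup (at the basepoint $\tilde b=(b,(e_H)_H)$) equals $\pi_1^{fu}(B,b)$.

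The key computation is identifying $q_*\pi_1$. The total space $B\times_{\sqcap_H B}(\sqcap_H E_H)$ is exactly $\{(b',(x_H)_H)\mid p_H(x_H)=b'\ \forall H\}=\oplus_{B}E_H$, the Whitney sum, and under this identification $q$ is $\oplus_B p_H$, whose fiber over $b$ is $\sqcap_H p_H^{-1}(b)$. A loop $[\alpha]\in\pi_1(B,b)$ lies in $q_*\pi_1(\oplus_B E_H,\tilde b)$ iff $\alpha$ lifts to a loop in $\oplus_B E_H$ at $\tilde b$; since lifting into the fiber product is coordinatewise and each $p_H$ is a fibration with upl (hence has unique path lifting), $\alpha$ lifts to a loop iff for every $H$ the unique lift $\widetilde{\alpha_H}$ starting at $e_H$ is a loop, i.e.\ iff $[\alpha]\in H$ for every $H$. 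Therefore $q_*\pi_1(\oplus_B E_H,\tilde b)=\bigcap_H H=\pi_1^{fu}(B,b)$, using Definition \ref{DE5.1}(i). The remaining point — that $q$ really is a fibration with upl — is already supplied by the cited results (\cite[Theorem 2.2.7]{SP} and \cite[page 98]{SP}) together with the stability of upl under products and pullbacks recorded earlier in this section.

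The main obstacle is foundational rather than computational: the collection of \emph{all} image subgroups of fibrations with upl over $B$ — or of representing fibrations — is a priori only a proper class, so one must argue that a set of representatives suffices. I would handle this by noting that for the purpose of the intersection only the subgroups $H\leq\pi_1(B,b)$ matter, and there is only a set of subgroups of a fixed group; choosing one representing fibration $p_H$ per such $H$ yields a set-indexed family $S$, and the intersection over $S$ already equals the intersection over the whole class. A secondary technical point to check is the coordinatewise lifting claim: a path in the fiber product $\oplus_B E_H$ over a path $\alpha$ in $B$ is the same as a compatible family of paths $(\widetilde{\alpha_H})_H$ with each $\widetilde{\alpha_H}$ lifting $\alpha$, and it is a loop iff each coordinate is; this is immediate from the definition of the categorical product/pullback topology but should be stated. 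Once these are in place, the identification $q_*\pi_1=\pi_1^{fu}(B,b)$ is exactly as above, completing the proof.
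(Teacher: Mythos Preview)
Your proposal is correct and follows essentially the same approach as the paper: choose one fibration with upl for each image subgroup, form the product, pull back along the diagonal $\bigtriangleup:B\to\sqcap_j B$, and identify the image of the resulting fibration with the intersection $\bigcap_j H_j$. Your identification of the pullback with the Whitney sum $\oplus_B E_H$ and the coordinatewise lifting argument match the paper's computation, and you are in fact more careful than the paper about the set-theoretic point (indexing by subgroups of $\pi_1(B,b)$ rather than by fibrations), which the paper leaves implicit.
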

\begin{proof}
Let {\{$H_{j}|j\in J$}\} be the family of image subgroups of fibrations with upl over $B$.\ For every $j\in J$, there is a fibration with upl $p_{j}:{E}_{j}\to B$ such that $p_{j*}\pi_{1}({E}_{j},{e}_{j})=H_{j}$, for an ${e}_{j}\in p_j^{-1}(b)$.\
 Fix ${e}_{j}$ as the base point of ${E}_j$ and let $p:=\sqcap_{j\in J}p_{j}:\sqcap_{j}{E}_{j}\to \sqcap_{j}B$.\ Then $\bigtriangleup^{*}p:\bigtriangleup^*(\sqcap_{j}{E}_{j})\to B$ is a fibration with upl,\ where $\bigtriangleup^*(\sqcap_{j}{E}_{j}):=B\times_{\sqcap_{j}B}(\sqcap_{j}E_{j})$.\ We show that the image of $(\bigtriangleup^{*}p)_{*}$ is $\cap_{j\in J}H_{j}$.\
Let $[\beta]\in \pi_{1}(\bigtriangleup^*(\sqcap_{j}E_{j}),(b,\sqcap_{j}{e_{j}}))$.\ Then $\beta=(\alpha, \sqcap_{j}\gamma_{j})$, where $\alpha$ is a loop in $B$ at $b$ and for every $j\in J$, $\gamma_{j}$ is a loop in $E_{j}$ at ${e_{j}}$.\ By definition of pullback, $$\bigtriangleup \circ\alpha=(\sqcap_{j}p_{j})\circ(\sqcap_{j}\gamma_{j})=\sqcap_{j}(p_{j}\circ\gamma_{j})$$ which implies that $p_{j}\circ\gamma_{j}=\alpha,$ for any $j\in J$.\ Hence we have $$p_{j*}[\gamma_{j}]=[p_{j}\circ\gamma_{j}]=[\alpha]\Rightarrow[\alpha]\in p_{j*}\pi_{1}({E}_{j},e_j)=H_{j}\Rightarrow[\alpha]\in \cap_{j}H_{j}.$$
Therefore  $(\bigtriangleup^{*}p)_{*}([\beta])=[(\bigtriangleup^{*}p)\circ\beta]=
[(\bigtriangleup^{*}(\sqcap_{j}p_{j}))\circ(\alpha,\sqcap_{j}\gamma_{j})]=[\alpha]$ and hence
 $(\bigtriangleup^{*}p)_{*}\pi_{1}(\bigtriangleup^*(\sqcap_{j}{E}_{j}))\subseteq \cap_{j}H_{j}$.\

  Conversely, let $[\alpha]\in \cap_{j}H_{j}$.\ Then for every $j\in J$, $[\alpha]\in H_{j}=p_{j*}\pi_{1}({E}_{j},{e}_{j})$, for an ${e}_j\in p_{j}^{-1}(b)$.\ For every $j\in J$, there is a loop $\delta_j$ in ${E}_j$ at ${e}_j$ such that $p_j\circ\delta_j\simeq \alpha$ rel\ $\dot{I}$.\ Let $\gamma_j$ be the lifting of $\alpha$ at ${e}_j$ by $p_j$.\ Since $p_j$ is a fibration with upl property, by Proposition \ref{P3.1} (i), $p_{j}$ has uphl and hence $\gamma_j\simeq \delta_j$ rel\ $\dot{I}$ and so $\gamma_j$ is a loop at ${e}_j$.\ Therefore, if we
   put $\beta:=(\alpha,\sqcap_{j}\gamma_{j})$, then $\beta$ is a loop in $\bigtriangleup^*(\sqcap_{j}{E}_{j})$
   since
    $$(\sqcap_{j}p_{j})\circ(\sqcap_{j}\gamma_{j})=\sqcap_{j}(p_{j}\circ\gamma_j)
   =\sqcap_{j}\alpha=\bigtriangleup \circ\alpha$$
    and
     $$(\bigtriangleup^{*}p)_{\ast}([\beta])=[(\bigtriangleup^{*}p)\circ\beta]=
[(\bigtriangleup^{*}p)\circ(\alpha,\sqcap_{j}\gamma_{j})]=[\alpha].$$
 This implies that $\cap_{j}H_{j}\subseteq (\bigtriangleup^{*}p)_{*}\pi_{1}(\bigtriangleup^*(\sqcap_{j}{E}_{j}),(b,\sqcap_{j}{e_{j}})).$
\end{proof}

For an open covering $\mathcal {U}$ of a given space $X$ and $x_{0}\in X$, $\pi(\mathcal {U},x_0)$, the Spanier subgroup with respect to $\mathcal {U}$, is the subgroup of $\pi_1(X,x_0)$ consisting of all homotopy classes of loops that can be represented by a product of the following type $$\prod\limits_{j=1}^{n}\alpha_j\ast\beta_j\ast\alpha_{j}^{-1},$$
 where the $\alpha_j$'s are arbitrary paths starting at the base point $x_0$ and each $\beta_j$ is a loop inside one of the neighborhoods $U_i\in \mathcal {U}$.\ Spanier \cite{SP} used this subgroup for classification of covering spaces of a given space.\ In fact, for every open cover $\mathcal {U}$ of $X$, there exists a covering map $p:\widetilde{X}_{\mathcal {U}}\to X$ such that $p_{\ast}\pi_{1}(\widetilde{X}_{\mathcal {U}},\tilde{x}_0)=\pi(\mathcal {U},x_0)$ and conversely, for every covering map $p:\widetilde{X}\to X$, there exists an open cover $\mathcal {U}$ of $X$ such that  $p_{\ast}\pi_{1}(\widetilde{X},\tilde{x}_0)=\pi(\mathcal {U},x_0)$ (see \cite[Theorems 2.5.12-13]{SP}).\ The Spanier group of a given space $X$, $\pi_{1}^{sp}(X,x_{0})$, which is introduced in \cite{FRVZ} is the intersection of all $\pi(\mathcal {U},x_0)$, for every open cover $\mathcal {U}$ of $X$.\ Mashayekhy et.al \cite{MPT1} used the Spanier group for the existence of some universal coverings of spaces with bad local behaviour. They showed in \cite{MPT1} that if $p:\widetilde{X}\to X$ is a categorical universal covering of $X$, then $p_{\ast}\pi_{1}(\widetilde{X},\tilde{x}_0)=\pi_{1}^{sp}(X,x_{0})$.\ But the existence of such categorical universal covering is not possible in general and we need  $X$ has some local properties which are introduced in \cite{MPT2}.\ Note that these local conditions are not necessary when we work with fibrations with upl.\ In the following theorem, we will compare these subgroups.\
\begin{proposition}\label{P5.4}
If $X$ is a connected and locally path connected space, then $$\pi_{1}^{fwu}(X,x_{0})\subseteq \pi_{1}^{fu}(X,x_{0})\subseteq \pi_{1}^{sp}(X,x_{0}).$$
\end{proposition}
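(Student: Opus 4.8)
The plan is to establish the second inclusion $\pi_1^{fu}(X,x_0)\subseteq \pi_1^{sp}(X,x_0)$, since the first inclusion $\pi_1^{fwu}(X,x_0)\subseteq \pi_1^{fu}(X,x_0)$ is already contained in Proposition~\ref{P5.2}. Recall that $\pi_1^{sp}(X,x_0)=\bigcap_{\U}\pi(\U,x_0)$, the intersection ranging over all open covers $\U$ of $X$. So it suffices to show that for each open cover $\U$ of $X$, the Spanier subgroup $\pi(\U,x_0)$ contains the image subgroup of \emph{some} fibration with upl over $X$; then, since $\pi_1^{fu}(X,x_0)$ is by definition contained in every such image subgroup, we get $\pi_1^{fu}(X,x_0)\subseteq \pi(\U,x_0)$ for every $\U$, hence $\pi_1^{fu}(X,x_0)\subseteq \pi_1^{sp}(X,x_0)$.

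The key step is to invoke the classical Spanier theory recalled just before the statement: because $X$ is connected and locally path connected, for every open cover $\U$ of $X$ there is a covering map $p_\U:\wt{X}_\U\lo X$ with $p_{\U*}\pi_1(\wt{X}_\U,\tilde x_0)=\pi(\U,x_0)$ (this is \cite[Theorems 2.5.12--13]{SP}, and local path connectedness is exactly the hypothesis needed so that the canonical construction from the cover yields an honest covering map). A covering map is in particular a fibration with unique path lifting (\cite[Theorems 2.2.2, 3.2.2]{SP}, as used for implication $(1,\Rightarrow)$ in the diagram of Section~4). Hence $p_\U$ is an object of $\mathrm{Fibu}(X)$, and its image subgroup equals $\pi(\U,x_0)$. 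Since $\pi_1^{fu}(X,x_0)$ is the intersection of the image subgroups over \emph{all} fibrations with upl over $X$, in particular $\pi_1^{fu}(X,x_0)\subseteq p_{\U*}\pi_1(\wt{X}_\U,\tilde x_0)=\pi(\U,x_0)$.

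Finally, intersecting over all open covers $\U$ gives $\pi_1^{fu}(X,x_0)\subseteq \bigcap_\U \pi(\U,x_0)=\pi_1^{sp}(X,x_0)$, which together with Proposition~\ref{P5.2} yields the chain $\pi_1^{fwu}(X,x_0)\subseteq \pi_1^{fu}(X,x_0)\subseteq \pi_1^{sp}(X,x_0)$. I expect the only genuine subtlety to be bookkeeping about base points: one must make sure $p_\U$ is taken pointed with $p_\U(\tilde x_0)=x_0$, and that the image subgroup appearing in the definition of $\pi_1^{fu}$ is independent (up to conjugacy, which here is automatic since we are intersecting a collection closed under conjugation as in Proposition~\ref{P5.2}) of the chosen point in the fiber. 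No new construction is needed — the whole argument is a reduction of the new $\pi_1^{fu}$ to the already-available covering-space/Spanier machinery, so the main obstacle is purely citing the right classical statements rather than any hard estimate.
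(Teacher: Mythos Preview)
Your proposal is correct and follows essentially the same route as the paper: the paper also cites Proposition~\ref{P3.1}(ii) (via Proposition~\ref{P5.2}) for the first inclusion and, for the second, invokes \cite[Theorem~2.5.13]{SP} to produce for each open cover $\mathcal U$ a covering map with image subgroup $\pi(\mathcal U,x_0)$, notes that covering maps are fibrations with upl, and intersects over all $\mathcal U$. Your remarks on base points are extra care beyond what the paper records, but the argument is otherwise identical.
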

\begin{proof}
  The left inclusion holds by Proposition \ref{P3.1} (ii).\ For the right inclusion, let $\mathcal {U}$ be an open cover of $X$.\ Using \cite[Theorems 2.5.13]{SP} there exsits a covering map $p:\widetilde{X}_{\mathcal {U}}\to X$ with $p_{\ast}\pi_{1}(\widetilde{X}_{\mathcal {U}},\tilde{x}_0)=\pi(\mathcal {U},x_0)$.\ Since every covering map is a fibration with upl, we have $\pi_{1}^{fu}(X,x_{0})\subseteq \pi(\mathcal {U},x_0)$.\ Since $\mathcal {U}$ is arbitrary we can conclude that $ \pi_{1}^{fu}(X,x_{0})\subseteq \pi_{1}^{sp}(X,x_{0})$.\
\end{proof}

 Brazas \cite{BR1} has introduced the subgroup $\pi_{1}^{gc}(X,x)$ which is the intersection of all the image subgroups of generalized covering maps of $X$.\ It is shown that $\pi_{1}^{gc}(X,x)$ is a generalized covering subgroup of $\pi_1(X,x)$ (see \cite [Theorem 2.3.15]{BR1}). Since every fibration with upl is a generalized covering map \cite{SP}, we have the following result.\
\begin{proposition}\label{P5.5}For a given space $X$ and $x_0\in X$, we have
$$\pi_{1}^{gc}(X,x_0)\subseteq \pi_{1}^{fu}(X,x_0)\subseteq \pi_{1}^{sp}(X,x_0).$$
\end{proposition}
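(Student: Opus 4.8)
The plan is to prove the two inclusions independently, each being an instance of the elementary observation that if a family $\mathcal{A}$ of subgroups of $\pi_1(X,x_0)$ is contained in a family $\mathcal{B}$, then $\bigcap\mathcal{B}\subseteq\bigcap\mathcal{A}$. The two substantive facts I would feed in are: (a) every fibration with upl over $X$ is a generalized covering map of $X$ (recorded in the paragraph preceding the statement, with reference \cite{SP}); and (b) for every open cover of $X$ there is a covering map realizing the corresponding Spanier subgroup as its image subgroup (Spanier, \cite[Theorem 2.5.13]{SP}).

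For the left inclusion $\pi_1^{gc}(X,x_0)\subseteq\pi_1^{fu}(X,x_0)$: by (a), whenever $p:E\to X$ is a fibration with upl, $p$ is a generalized covering map, so for any $e\in p^{-1}(x_0)$ its image subgroup $p_*\pi_1(E,e)$ lies among the image subgroups of generalized covering maps of $X$. Thus the family of subgroups intersected to define $\pi_1^{fu}(X,x_0)$ (Definition \ref{DE5.1}(i)) is a subfamily of the one defining $\pi_1^{gc}(X,x_0)$, and therefore $\pi_1^{gc}(X,x_0)\subseteq\pi_1^{fu}(X,x_0)$.

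For the right inclusion $\pi_1^{fu}(X,x_0)\subseteq\pi_1^{sp}(X,x_0)$: this is exactly the right-hand inclusion of Proposition \ref{P5.4}, so I would simply invoke (or reproduce) that argument. Fix an open cover $\mathcal{U}$ of $X$; by (b) there is a covering map $p:\widetilde{X}_{\mathcal{U}}\to X$ with $p_*\pi_1(\widetilde{X}_{\mathcal{U}},\tilde{x}_0)=\pi(\mathcal{U},x_0)$; since a covering map is in particular a fibration with upl, the subgroup $\pi(\mathcal{U},x_0)$ is one of the subgroups intersected in the definition of $\pi_1^{fu}(X,x_0)$, so $\pi_1^{fu}(X,x_0)\subseteq\pi(\mathcal{U},x_0)$; as $\mathcal{U}$ is arbitrary, intersecting over all open covers yields $\pi_1^{fu}(X,x_0)\subseteq\bigcap_{\mathcal{U}}\pi(\mathcal{U},x_0)=\pi_1^{sp}(X,x_0)$.

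The formal content — monotonicity of intersections over families of subgroups — is routine; the real obstacle is the hypotheses. Both external inputs (b) and (a) rely on $X$ being connected and locally path connected (and, for (a), one must also know that the relevant total spaces are locally path connected), so either these should be added to the statement or carried as standing assumptions, as is already done in Proposition \ref{P5.4}. Once that is understood, no genuinely new computation is needed.
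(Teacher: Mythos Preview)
Your proposal is correct and follows essentially the same route as the paper: the paper derives the left inclusion from the single sentence preceding the proposition (``every fibration with upl is a generalized covering map \cite{SP}''), and the right inclusion is just the second inclusion of Proposition~\ref{P5.4}. Your remark that both inputs tacitly require connectedness and local path connectedness (as in Proposition~\ref{P5.4}) is a valid observation that the paper leaves implicit in the statement of Proposition~\ref{P5.5}.
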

\begin{remark}
  Note that by Remark \ref{R4.11}, there is no relationship between generalized coverings and fibrations with wuphl in general which implies that there is no inclusion relation  between $\pi_{1}^{gc}(X,x)$ and $\pi_{1}^{fwu}(X,x)$.
\end{remark}

Let $p:\widetilde{X}\to X$ be a simply connected universal object in the category Fibu(X), i.e, $\pi_{1}(\widetilde{X},\tilde{x})=0$.\ Then since $\pi_{1}^{fu}(X,x)\subseteq  p_{\ast}\pi_{1}(\widetilde{X},\tilde{x})$, we have $\pi_{1}^{fu}(X,x)=0$.\
Conversely, let $\pi_{1}^{fu}(X,x)=0$.\ We know that the category Fibu(X) has a universal object $p:(\widetilde{X},\tilde{x})\to (X,x)$ \cite [page 84]{SP}.\ If $p':(\widetilde{Y},\tilde{y})\to (X,x)$ is an arbitrary object in Fibu(X), then there is an object $q:(\widetilde{X},\tilde{x})\to (\widetilde{Y},\tilde{y})$ such that $p'\circ q=p$.\ Therefore
 $$p_{\ast}\pi_{1}(\widetilde{X},\tilde{x})=(p'\circ q)_{\ast}\pi_{1}(\widetilde{X},\tilde{x})=p'_{\ast}\circ q_{\ast}(\pi_{1}(\widetilde{X},\tilde{x}))\subseteq p'_{\ast}\pi_1(\widetilde{Y},\tilde{y})$$
 which implies that
 $$ p_{\ast}\pi_{1}(\widetilde{X},\tilde{x})\subseteq \bigcap\ {\{p'_{\ast}\pi_1(\widetilde{Y},\tilde{y})|\ p':(\widetilde{Y},\tilde{y})\to (X,x)\ is\ an\ object\ of\ \mathrm{Fibu(X)}}\}$$\
Hence
 $ p_{\ast}\pi_{1}(\widetilde{X},\tilde{x})\subseteq\pi_{1}^{fu}(X,x)=0$ and so $p_{\ast}\pi_{1}(\widetilde{X},\tilde{x})=0.$\\
Since $p_\ast$ is a monomorphism, $\pi_{1}(\widetilde{X},\tilde{x})=0$ and hence $\widetilde {X}$ is simply connected.\ Thus we have the following result.
\begin{theorem}\label{T5.7}
For a given space $X$ and $x\in X$, the category $\mathrm{Fibu(X)}$ admits a simply connected universal object if and only if $\pi_{1}^{fu}(X,x)=0$.\
\end{theorem}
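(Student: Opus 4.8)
The plan is to prove the two implications of the biconditional separately, the whole argument resting on two facts established earlier: that by Definition \ref{DE5.1} the group $\pi_{1}^{fu}(X,x)$ is the intersection of the image subgroups $p'_{*}\pi_{1}(\widetilde Y,\tilde y)$ taken over all objects $p'$ of $\mathrm{Fibu(X)}$, and that a fibration with upl induces a monomorphism on $\pi_{1}$ (this follows from Proposition \ref{P3.1}(ii) together with Corollary \ref{C3.6}, and is also \cite[Theorem 2.3.4]{SP}).

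For the ``only if'' direction I would suppose $p\colon(\widetilde X,\tilde x)\to(X,x)$ is a simply connected universal object of $\mathrm{Fibu(X)}$, so that $\pi_{1}(\widetilde X,\tilde x)=0$. Since $p$ is itself an object of $\mathrm{Fibu(X)}$, its image subgroup $p_{*}\pi_{1}(\widetilde X,\tilde x)=0$ occurs in the intersection defining $\pi_{1}^{fu}(X,x)$; hence $\pi_{1}^{fu}(X,x)\subseteq 0$, i.e.\ $\pi_{1}^{fu}(X,x)=0$.

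For the ``if'' direction, assume $\pi_{1}^{fu}(X,x)=0$ and invoke the existence of a universal object $p\colon(\widetilde X,\tilde x)\to(X,x)$ in $\mathrm{Fibu(X)}$ \cite[page 84]{SP}. For an arbitrary object $p'\colon(\widetilde Y,\tilde y)\to(X,x)$, universality provides a morphism $q\colon(\widetilde X,\tilde x)\to(\widetilde Y,\tilde y)$ with $p'\circ q=p$, and applying $\pi_{1}$ yields $p_{*}\pi_{1}(\widetilde X,\tilde x)=p'_{*}q_{*}\pi_{1}(\widetilde X,\tilde x)\subseteq p'_{*}\pi_{1}(\widetilde Y,\tilde y)$. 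As $p'$ ranges over all objects of $\mathrm{Fibu(X)}$ this shows that $p_{*}\pi_{1}(\widetilde X,\tilde x)$ is contained in every such image subgroup, hence in their intersection: $p_{*}\pi_{1}(\widetilde X,\tilde x)\subseteq\pi_{1}^{fu}(X,x)=0$. Since $p_{*}$ is a monomorphism, $\pi_{1}(\widetilde X,\tilde x)=0$, i.e.\ $\widetilde X$ is simply connected, so $p$ is the desired simply connected universal object.

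The one step that requires genuine care, rather than a diagram chase, is the appeal to the unconditional existence of a universal object in $\mathrm{Fibu(X)}$: unlike the classical covering-space situation (which needs semilocal simple connectivity, or the weaker local conditions of \cite{MPT2}), here one relies on Spanier's construction of a universal fibration with unique path lifting over an arbitrary path-connected $X$, and the write-up should make explicit why no local hypothesis on $X$ enters. Everything else reduces to the injectivity of $p_{*}$ and the definition of $\pi_{1}^{fu}$.
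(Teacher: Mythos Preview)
Your proof is correct and follows essentially the same route as the paper: both directions proceed exactly as you outline, using the containment $\pi_{1}^{fu}(X,x)\subseteq p_{*}\pi_{1}(\widetilde X,\tilde x)$ for the forward implication, and for the converse invoking Spanier's universal object in $\mathrm{Fibu(X)}$, factoring through an arbitrary $p'$ to get $p_{*}\pi_{1}(\widetilde X,\tilde x)\subseteq\pi_{1}^{fu}(X,x)=0$, and concluding via injectivity of $p_{*}$. Your closing remark about the unconditional existence of the universal object (no local hypothesis on $X$) is a useful clarification that the paper leaves implicit.
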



\begin{thebibliography}{99}

\bibitem{BR1}{}{\sc  J. Brazas}, A characterization of the unique path lifting property for the whisker topology, talk at the Joint Meeting of the German Mathematical Society and the Polish Mathematical Society, September 18, 2014, Poznan, Poland.
\bibitem{BR2}{}{\sc  J. Brazas},  Semicoverings: a generalization of covering space theory,{\it Homology Homotopy Appl.} 14
(2012), 33--63.
\bibitem{BD2}{}{\sc N. Brodskiy, J. Dydak, B. Labuz, A. Mitra}, Covering maps for locally path-connected spaces, {\it Fund.
Math.} 218 (2012), 13--46.
\bibitem{FZ1}{}{\sc H. Fischer, A. Zastrow}, A core-free semicovering of the Hawaiian Earring, {\it Topology Appl.} 160 (2013), 1957--1967.
\bibitem{FZ}{}{\sc H. Fischer, A. Zastrow}, Generalized universal coverings and the shape group, {\it Fund. Math.} 197 (2007), 167--196.
\bibitem{FRVZ}{}{\sc H. Fischer, D. Repovs, Z. Virk, A. Zastrow}, On semilocally simply
connected spaces, {\it Topology Appl.} 158 (2011), 397-408.
\bibitem{K}{}{\sc C. Klevdal}, A Galois Correspondence with Generalized Covering Spaces, Undergratuate Honor Theses, University of Colorado, 2015.
\bibitem{MPT1}{}{\sc B. Mashayekhy, A. Pakdaman, H. Torabi}, Spanier spaces and covering
theory of non-homotopically path Hausdorff spaces, {\it Georgian Mathematical
Journal}, 20 (2013) 303-317.
\bibitem{MPT2}{}{\sc  A. Pakdaman, H. Torabi, B. Mashayekhy}, On the Existence of Categorical Universal Coverings, arXiv:1111.6736v3.
\bibitem{SP}{}{\sc  E.H. Spanier}, Algebraic Topology, McGraw-Hill, New York, 1966.

\end{thebibliography}
\end{document}